\newcommand{\R}{\mathbb{R}}
\DeclareMathOperator{\dom}{dom}
\DeclareMathOperator{\ran}{ran}
\DeclareMathOperator{\gra}{gra}
\newtheorem{theorem}{Theorem}[section]
\newtheorem{lemma}[theorem]{Lemma}
\newtheorem{proposition}[theorem]{Proposition}
\newtheorem{corollary}[theorem]{Corollary}
\theoremstyle{definition}
\newtheorem{definition}[theorem]{Definition}
\newtheorem{example}[theorem]{Example}
\theoremstyle{remark}
\newtheorem{remark}[theorem]{Remark}
\newcommand{\tos}{\rightrightarrows} 
\newcommand{\inner}[2]{\langle #1,#2 \rangle}
\title{The Pseudomonotone Polar for Multivalued Operators}
\author{Orestes Bueno
	\thanks{Universidad del Pac\'ifico. Av. Salaverry 2020, Jes\'us Mar\'ia, Lima, Per\'u. Email: 
	\texttt{\{o.buenotangoa, cotrina\_je\}@up.edu.pe}} 
	\and John Cotrina\footnotemark[1]}
\DeclareMathOperator{\cone}{cone}
\DeclareMathOperator{\co}{co}
\newcounter{enumtemp}
\begin{document}
\maketitle

\begin{abstract}
In this work, we study the pseudomonotonicity of multivalued operators from the point of view of polarity, in an analogous way as the well-known monotone polar due to Mart\'inez-Legaz and Svaiter, and the quasimonotone polar recently 
introduced by Bueno and Cotrina.  
We show that this new polar, adapted for pseudomonotonicity, possesses analogous properties to the monotone and quasimonotone polar, among which are a characterization of pseudomonotonicity, maximality and pre-maximality. Furthermore, we characterize the notion of $D$-maximal pseudomonotonicity introduced by Hadjisavvas. 
We conclude this work studying the connections between pseudomonotonicity and variational inequality problems.

\bigskip

\noindent{\bf Keywords:}
Pseudomonotonicity, Maximal pseudomonotonicity,  
Pseudomonotone Polarity, Variational Inequality

\bigskip

\noindent{\bf MSC (2010):} 47H04, 47H05, 49J53
\end{abstract}

\section{Introduction}
The notion of pseudomonotonicity, in the sense of Karamardian~\cite{Karamardian1976}, plays a big role in certain mathematical applications, for instance, variational inequality problems and problems on consumer theory, see \cite{Crouzeix1997,Crouzeix2011,HadjSchai06,Hadjisavvas2012,John2001}, and the references therein. 

The theory of pseudomonotone operators has been steadily developed in the past decades. 
However, in contrast with the theory of maximal monotone operators, the literature about maximal generalized monotonicity is, to our best knowledge, reduced to \cite{Aussel2013-4,MR2577322,Eberhard07,Had03}.
In particular, in \cite{Had03}, the notion of $D$-maximal pseudomonotonicity is introduced, which is based on an equivalence related to variational inequalities. 


The monotone polar was introduced by Mart\'inez-Legaz and Svaiter~\cite{BSML}.
Its main feature is to provide an easy characterization of monotonicity, maximal monotonicity and pre-maximal monotonicity.  In a similar way, Bueno and Cotrina~\cite{BueCot16} introduced the quasimonotone polar which, besides  characterizing  (maximal, pre-maximal) quasimonotonicity, is related to the Minty variational inequality problem and, in particular, to the study of the adjusted normal operator defined by Aussel and Hadjisavvas~\cite{AuHad2005}.

The aim of this paper is to introduce the pseudomonotone polar and study its connection with the maximal pseudomonotonicty (including the weaker $D$-maximality) and its connection with the Minty and Stampacchia variational inequality problems.  The paper is organized as follows: in section 3 we provide the definition of pseudomonotone polar, along with a characterization in terms of normal cones and the study of its set of zeros.  In sections 4 and 5 we use the polar to study the pseudomonotonicity, maximal and pre-maximal pseudomonotonicity of an operator and, in addition, we characterize the $D$-maximality in terms of the pseudomonotone polar.  Finally in section 6, we relate the pseudomonotonicity with the solution sets of variational inequality problems.

\section{Preliminary definitions and notations}
Let $U,V$ be non-empty sets. 
A \emph{multivalued operator} $T:U\tos V$ is an application $T:U\to \mathcal{P}(V)$, that is, for $u\in U$, $T(u)\subset V$. 
The \emph{domain}, \emph{range} and \emph{graph} of $T$ are defined, respectively, as
\begin{gather*}
\dom(T)=\big\{u\in U\::\: T(u)\neq\emptyset\big\},\qquad \ran(T)=\bigcup_{u\in U}T(u),\\ 
\text{and }\gra(T)=\big\{(u,v)\in U\times V\::\: v\in T(u)\big\}.
\end{gather*}
Given $A\subset U$, the \emph{restriction} of $T$ to $A$ is the operator $T|_A$ defined as $T|_A(x)=T(x)$, 
if $x\in A$, and $T|_A(x)=\emptyset$, otherwise.

From now on, we will identify multivalued operators with their graphs, so we will write $(u,v)\in T$ instead of $(u,v)\in \gra(T)$.

Let $V$ be a real vector space, and let $A\subset V$. The \emph{convex hull} of $A$, denoted as $\co(A)$, is the smallest convex set (in the sense of inclusion) which contains $A$. The \emph{segment} between $u$ and $v$ in $V$ is the set $[u,v]=\co\{u,v\}$.
The \emph{conic hull} of $A$ is the set
\[
\cone(A)=\{tv\in V\::\:t\geq 0,\, v\in A\},
\]
whereas its \emph{strict conic hull} is the set
\[
\cone_{\circ}(A)=\{tv\in V\::\:t> 0,\, v\in A\}.
\]
Furthermore, given $T:U\tos V$, we denote $\cone(T):U\tos V$ (respectively, $\cone_{\circ}(T):U\tos V$) as the operator defined by $\cone(T)(x)=\cone(T(x))$ (respectively, $\cone_{\circ}(T)(x)=\cone_{\circ}(T(x))$), for all $x\in\dom(T)$. In addition, the \emph{set of zeros} of $T$ is the set
\[
Z_T=\{x\in X\::\: 0\in T(x)\}.
\]
Note that $Z_T=Z_{\cone_{\circ}(T)}$.
\newcommand{\cones}{\cone_{\circ}}

Let $X$ be a Banach space and $X^*$ be its topological dual. 
The \emph{duality product} is defined as $\inner{x}{x^*}=x^*(x)$.

Let $C\subset X$ be a convex set and $x\in X$. The \emph{normal cone} of $C$ at $x$ is the set
\[
N_{C}(x)=\{x^*\in X^*\::\: \inner{y-x}{x^*}\leq 0,\,\forall\, y\in C\}.
\]
Moreover, the \emph{strict normal cone} of $C$ at $x$ is the set
\[
N^{\circ}_{C}(x)=\{x^*\in X^*\::\: \inner{y-x}{x^*}< 0,\,\forall\, y\in C\}.
\]
By a vacuity argument, we note that if $C=\emptyset$ then $N_C(x)=N^{\circ}_C(x)=X^*$. 

The \emph{monotone polar}~\cite{BSML} of an operator $T:X\tos X^*$ is the operator $T^{\mu}$ defined as
\[
T^{\mu}=\{(x,x^*)\in X\times X^*\::\: \inner{x-y}{x^*-y^*}\geq 0,\,\forall\,(y,y^*)\in T\}.
\]
In the same way, the \emph{quasimonotone polar}~\cite{BueCot16} of $T$ is the operator $T^{\nu}$ defined as
\[
T^{\nu}=\{(x,x^*)\in X\times X^*\::\:\min\{\inner{y-x}{x^*},\inner{x-y}{y^*}\}\leq 0,\,\forall\,(y,y^*)\in T\}.
\]
The notions of \emph{(quasi)monotonicity}, \emph{maximal} and \emph{pre-maximal (quasi)monotonicity} can be expressed in terms of the (quasi)monotone polar. More precisely: 
\begin{enumerate}
\item $T$ is (quasi)monotone if, and only if, $T\subset T^{\mu} (T^\nu)$;
\item $T$ is maximal (quasi)monotone if, and only if, $T=T^{\mu}(T^\nu)$;
\item $T$ is pre-maximal (quasi)monotone if, and only if, $T$ and $T^{\mu} (T^\nu)$ are (quasi)monotone.
\end{enumerate}
%


\section{The pseudomonotone polar}
We say that $(x,x^*)$ is in a \emph{pseudomonotone relation} with $(y,y^*)$, denoted by $(x,x^*)\sim_p(y,y^*)$, if
\[
\min\{\inner{x-y}{y^*},\inner{y-x}{x^*}\}<0\quad\text{or}\quad\inner{x-y}{y^*}=\inner{y-x}{x^*}=0.
\]
It is clear that $\sim_p$ is a reflexive and symmetric relation on $X\times X^*$. Although it is not transitive, a weaker form of transitivity can be obtained.
\begin{proposition}\label{pro:pseudo-new}
	Let $(x,x^*),(y,y^*),(z,z^*)\in X\times X^*$, with $z\in[x,y]$. If $(x,x^*)\sim_p(z,z^*)$ and $(z,z^*)\sim_p(y,y^*)$ then $(x,x^*)\sim_p(y,y^*)$.
\end{proposition}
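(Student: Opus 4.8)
The plan is to collapse the statement to a one-dimensional sign computation along the direction $y-x$. First I would discard the trivial case $x=y$ (then $z=x=y$ and the three relations hold automatically), so assume $x\neq y$ and write $z=(1-t)x+ty$ for some $t\in[0,1]$. This yields the displacement identities $z-x=t(y-x)$, $y-z=(1-t)(y-x)$, $x-z=-t(y-x)$ and $z-y=-(1-t)(y-x)$, which let me express every duality product occurring in the three pseudomonotone relations as a scalar multiple of one of the three numbers $\alpha=\inner{y-x}{x^*}$, $\beta=\inner{y-x}{z^*}$, $\gamma=\inner{y-x}{y^*}$.

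Substituting, the hypothesis $(x,x^*)\sim_p(z,z^*)$ becomes ``$\min\{-t\beta,t\alpha\}<0$ or $t\beta=t\alpha=0$'', the hypothesis $(z,z^*)\sim_p(y,y^*)$ becomes ``$\min\{-(1-t)\gamma,(1-t)\beta\}<0$ or $(1-t)\gamma=(1-t)\beta=0$'', and the conclusion $(x,x^*)\sim_p(y,y^*)$ becomes ``$\min\{-\gamma,\alpha\}<0$ or $\gamma=\alpha=0$''. For $t\in(0,1)$ I divide by the positive factors $t$ and $1-t$, so the two hypotheses read ``$\beta>0$ or $\alpha<0$ or $\alpha=\beta=0$'' and ``$\gamma>0$ or $\beta<0$ or $\gamma=\beta=0$'', while the goal reads ``$\gamma>0$ or $\alpha<0$ or $\gamma=\alpha=0$''; the endpoint cases $t\in\{0,1\}$, i.e.\ $z\in\{x,y\}$, are treated separately and directly.

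The core is a short case analysis, most cleanly run by contradiction: suppose the goal fails, so $\alpha\geq 0$, $\gamma\leq 0$ and $(\alpha,\gamma)\neq(0,0)$. Since $\alpha\geq 0$, the first hypothesis forces $\beta\geq 0$ (either $\beta>0$, or else $\alpha=\beta=0$); then $\beta\geq 0$ together with $\gamma\leq 0$ collapses the second hypothesis to $\beta=\gamma=0$; and finally $\beta=0$ fed back into the first hypothesis forces $\alpha=0$, whence $(\alpha,\gamma)=(0,0)$, a contradiction. Geometrically, the region forbidden for a pseudomonotone relation, read in the plane of the two pairings against $y-x$, is the closed fourth quadrant with the origin removed; it is a cone, hence insensitive to the positive rescalings by $t$ and $1-t$, and the computation merely records that $(\alpha,\beta)$, $(\beta,\gamma)$ and $(\alpha,\gamma)$ cannot all lie outside it while $(\alpha,\gamma)$ lies inside. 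I do not expect a real obstacle: the only points demanding care are the sign bookkeeping when rewriting the products in terms of $\alpha,\beta,\gamma$, the (harmless) division by $t$ and $1-t$, and keeping the three disjunctions straight through the case split.
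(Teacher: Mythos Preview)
Your proposal is correct and follows essentially the same approach as the paper: both argue by contradiction, use the collinearity $z\in[x,y]$ to rewrite every pairing as a scalar multiple of a pairing against $y-x$, and then chase signs to a contradiction. The only cosmetic differences are that you name the three key pairings $\alpha,\beta,\gamma$ and treat the full negation directly, whereas the paper dispenses with the symmetry by a ``without loss of generality'' and works with the raw inner products.
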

\begin{proof}
	Let $z=tx+(1-t)y$, with $t\in]0,1[$ (the result is trivial when $t=0,1$). Note that
	\begin{gather*}
	\inner{z-y}{y^*}=t\inner{x-y}{y^*},\qquad\inner{z-x}{x^*}=(1-t)\inner{y-x}{x^*}\\
		(1-t)\inner{y-z}{z^*}=-t\inner{x-z}{z^*}.
	\end{gather*}
	Without loss of generality, assume that $\inner{x-y}{y^*}>0$ and $\inner{y-x}{x^*}\geq 0$. Then $\inner{z-x}{x^*}\geq 0$ and $\inner{z-y}{y^*}>0$. As $(y,y^*)\sim_p(z,z^*)$, the latter implies that $\inner{y-z}{z^*}<0$, which in turn implies $\inner{x-z}{z^*}>0$. This contradicts $(x,x^*)\sim_p(z,z^*)$.
\end{proof}

The proof of the following lemma is analogous to Lemma 3.4 in~\cite{BueCot16}
\begin{lemma}\label{3.6}
Let $T:X\tos X^*$ be a multivalued operator and let $(x,x^*)\in T^{\rho}$, $(y,y^*)\in T$ and $t,s> 0$. Then $(x,tx^*)\sim_p(y,sy^*)$.
\end{lemma}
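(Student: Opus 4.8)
The plan is to observe that the pseudomonotone relation $\sim_p$ is insensitive to multiplying the dual coordinates by strictly positive scalars, and then to invoke the definition of the polar.

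First I would record what membership in $T^{\rho}$ provides. By the definition of the pseudomonotone polar, $(x,x^*)\in T^{\rho}$ means that $(x,x^*)\sim_p(y',y'^*)$ for every $(y',y'^*)\in T$; applied to the given pair $(y,y^*)\in T$ this yields $(x,x^*)\sim_p(y,y^*)$. Abbreviating $a=\inner{x-y}{y^*}$ and $b=\inner{y-x}{x^*}$, this says precisely that either $\min\{a,b\}<0$ or $a=b=0$.

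Next I would translate the desired conclusion in the same way. By bilinearity of the duality product, $\inner{x-y}{sy^*}=sa$ and $\inner{y-x}{tx^*}=tb$, so the statement $(x,tx^*)\sim_p(y,sy^*)$ is exactly the assertion that $\min\{sa,tb\}<0$ or $sa=tb=0$. If $a=b=0$ then $sa=tb=0$ and we are done; if instead $\min\{a,b\}<0$, then at least one of $a,b$, say $a$, is strictly negative, and since $s>0$ we get $sa<0$, hence $\min\{sa,tb\}<0$. Thus each disjunct of the hypothesis implies the corresponding disjunct of the conclusion, which finishes the argument.

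There is essentially no obstacle here: the proof reduces to a one-line sign-preservation remark, exactly as in Lemma~3.4 of~\cite{BueCot16}. The only mild points of care are that the definition of $\sim_p$ is a disjunction, so each clause must be handled separately, and that reading $\min\{a,b\}<0$ correctly means extracting a single strictly negative coordinate, whose sign is manifestly unchanged under multiplication by the positive scalars $s$ and $t$.
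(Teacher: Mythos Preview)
Your argument is correct and is precisely the sign-preservation computation the paper has in mind when it says the proof is analogous to Lemma~3.4 in~\cite{BueCot16}; the paper omits the details entirely, and your write-up fills them in exactly as intended.
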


\begin{definition}
The \emph{pseudomonotone polar} of $T:X\tos X^*$, $T^{\rho}$ is given by
\[
T^{\rho}=\{(x,x^*)\in X\times X^*:~(x,x^*)\sim_p(y,y^*),~\forall (y,y^*)\in T\} 
\]
\end{definition}
It is clear from the definition that 
\begin{equation}\label{eq:polarsubset}
T^\mu\subset T^\rho\subset T^\nu, 
\end{equation}
for every operator $T:X\tos X^*$.
\begin{example}\label{exa:xtimes0}
For $x\in X$, $\{(x,0)\}^\rho=\{(x,0)\}^\mu$. Moreover, it is straightforward to verify that 
$(X\times \{0\})^\rho= X\times \{0\}$.
\end{example}

The following proposition lists some polarity properties of $T^{\rho}$.

\begin{proposition}\label{pro:polarp}\quad
\begin{enumerate}
\item $\displaystyle\left(\bigcup_{i\in I}T_i\right)^{\rho}=\bigcap_{i\in I}T_i^{\rho}$.
\item $T\subset T^{\rho\rho}$.
\item $T^{\rho\rho\rho}=T^{\rho}$.
\item If $T\subset S$ then $S^{\rho}\subset T^{\rho}$.
\setcounter{enumtemp}{\theenumi}
\end{enumerate}
In addition
\begin{enumerate}
\setcounter{enumi}{\theenumtemp}
\item $\emptyset^{\rho}=X\times X^*$, $(X\times X^*)^{\rho}=\emptyset$
\item $T^{\rho\mu}\subset T^{\mu\mu}\subset T^{\mu\rho}$
\item $T^{\rho\mu}\subset T^{\rho\rho}\subset T^{\mu\rho}$
\end{enumerate}
\end{proposition}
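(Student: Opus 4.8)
The plan is to derive the entire proposition from three elementary facts: $\sim_p$ is reflexive and symmetric (noted right after its definition), the polar $T\mapsto T^{\rho}$ is defined by a universal quantifier over the right-hand argument, and the chain $S^{\mu}\subset S^{\rho}\subset S^{\nu}$ of \eqref{eq:polarsubset} holds for \emph{every} operator $S$. Items~1 and~4 come by merely unfolding the definition: $(x,x^*)\in\big(\bigcup_i T_i\big)^{\rho}$ says $(x,x^*)\sim_p(y,y^*)$ for every $(y,y^*)$ in some $T_i$, which is the same as $(x,x^*)\in T_i^{\rho}$ for all $i$; and enlarging $T$ to $S$ only enlarges the family of constraints defining the polar, so $S^{\rho}\subset T^{\rho}$. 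Item~2 uses symmetry: if $(x,x^*)\in T$ and $(y,y^*)\in T^{\rho}$, then $(y,y^*)\sim_p(x,x^*)$ by the defining property of $T^{\rho}$, hence $(x,x^*)\sim_p(y,y^*)$; since $(y,y^*)\in T^{\rho}$ was arbitrary, $(x,x^*)\in T^{\rho\rho}$. Item~3 is then the standard Galois-connection argument: applying item~2 to $T^{\rho}$ gives $T^{\rho}\subset T^{\rho\rho\rho}$, while applying the antitone item~4 to $T\subset T^{\rho\rho}$ gives $T^{\rho\rho\rho}=(T^{\rho\rho})^{\rho}\subset T^{\rho}$.

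For item~5 the equality $\emptyset^{\rho}=X\times X^*$ is vacuous. For $(X\times X^*)^{\rho}=\emptyset$ I would exhibit, for each $(x,x^*)$, a pair it is not in pseudomonotone relation with. Unwinding the definition, $(x,x^*)\not\sim_p(y,y^*)$ exactly when $\inner{x-y}{y^*}\ge 0$, $\inner{y-x}{x^*}\ge 0$, and these are not both zero. If $x^*\neq 0$, choose $h\in X$ with $\inner{h}{x^*}>0$ and take $(y,y^*)=(x+h,0)$; if $x^*=0$, choose any $y\neq x$ and, by Hahn--Banach, $y^*\in X^*$ with $\inner{x-y}{y^*}>0$. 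In either case $(x,x^*)\notin(X\times X^*)^{\rho}$. (Alternatively, $(X\times X^*)^{\rho}\subset(X\times X^*)^{\nu}=\emptyset$ by \eqref{eq:polarsubset} and the corresponding fact for the quasimonotone polar from \cite{BueCot16}.)

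Items~6 and~7 I would obtain purely formally from \eqref{eq:polarsubset} together with the antitonicity of both polars (item~4 for $\rho$, and its standard analogue for $\mu$ from \cite{BSML}). Concretely: $T^{\rho\mu}\subset T^{\mu\mu}$ follows by applying the $\mu$-polar to $T^{\mu}\subset T^{\rho}$; the inclusions $T^{\mu\mu}\subset T^{\mu\rho}$ and $T^{\rho\mu}\subset T^{\rho\rho}$ are the instances of $S^{\mu}\subset S^{\rho}$ with $S=T^{\mu}$ and $S=T^{\rho}$; and $T^{\rho\rho}\subset T^{\mu\rho}$ follows by applying the $\rho$-polar to $T^{\mu}\subset T^{\rho}$.

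I expect no real obstacle: the only genuine content is the Hahn--Banach witness in item~5, and everything else is bookkeeping. The mild care needed is to keep straight which polar is being used as antitone at each step, and to note that the second equality of item~5 tacitly assumes $X\neq\{0\}$ (for $X=\{0\}$ the space $X\times X^*$ is a single point which lies in the pseudomonotone relation with itself).
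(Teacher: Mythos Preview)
Your proof is correct and follows the same outline as the paper's own proof, which simply cites that $T\mapsto T^{\rho}$ is a polarity for items~1--4, invokes a definition/vacuity argument for item~5, and appeals to \eqref{eq:polarsubset} together with item~4 for items~6--7. You are in fact more careful than the paper on item~5: the paper glosses over the second equality, while you supply the explicit Hahn--Banach witness (and correctly flag the degenerate case $X=\{0\}$, which the paper tacitly excludes); you also make explicit the use of antitonicity of the $\mu$-polar in item~6, which the paper's terse reference to ``item~4'' leaves implicit.
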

\begin{proof}
Items {\it 1} to {\it 4} are direct consequences of the fact that the map $T\mapsto T^{\rho}$ is a polarity~\cite{BSML}. Item {\it 5} follows directly from the definition and a vacuity argument. Items {\it 6} and {\it 7} come from equation~\eqref{eq:polarsubset} and item {\it 4}.   	
\end{proof}



We now deal with the set of zeros of $T^{\rho}$.

\begin{proposition}\label{pro:zeros}
Let $T:X\tos X^*$ be a multivalued operator. Then $Z_{T^\rho}$ is a weak-closed convex set.
\end{proposition}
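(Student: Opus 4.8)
The plan is to unwind the definition of $T^\rho$ and exhibit $Z_{T^\rho}=\{x\in X:(x,0)\in T^\rho\}$ as an intersection of half-spaces (or full spaces), which is automatically convex and weak-closed. Fix $(y,y^*)\in T$. The condition $(x,0)\sim_p(y,y^*)$ reads
\[
\min\{\inner{x-y}{y^*},\inner{y-x}{0}\}<0\quad\text{or}\quad\inner{x-y}{y^*}=\inner{y-x}{0}=0,
\]
and since $\inner{y-x}{0}=0$ always, this simplifies: the first alternative becomes $\inner{x-y}{y^*}<0$ and the second becomes $\inner{x-y}{y^*}=0$. Hence $(x,0)\sim_p(y,y^*)$ is equivalent to $\inner{x-y}{y^*}\le 0$, i.e.\ $\inner{x}{y^*}\le\inner{y}{y^*}$. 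Therefore
\[
Z_{T^\rho}=\bigcap_{(y,y^*)\in T}\{x\in X:\inner{x}{y^*}\le\inner{y}{y^*}\}.
\]

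Each set in this intersection is a closed half-space of $X$ when $y^*\neq 0$ (the sublevel set of the weak-continuous linear functional $x\mapsto\inner{x}{y^*}$), and it is either all of $X$ or empty when $y^*=0$ — in fact when $y^*=0$ the inequality $0\le 0$ holds, so it is all of $X$. In every case the set $\{x:\inner{x}{y^*}\le\inner{y}{y^*}\}$ is convex and weak-closed. An arbitrary intersection of convex weak-closed sets is convex and weak-closed, so $Z_{T^\rho}$ is a weak-closed convex set, which is the claim. (If $T=\emptyset$ the intersection is over the empty index set, giving $Z_{T^\rho}=X$, which is still weak-closed and convex, consistent with $\emptyset^\rho=X\times X^*$ from Proposition~\ref{pro:polarp}.)

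There is essentially no obstacle here; the only point requiring a moment of care is checking that the degenerate branch of $\sim_p$ (the simultaneous-equality clause) does not impose any extra constraint and that the $y^*=0$ case is handled, both of which are immediate once the duality product with the zero functional is observed to vanish identically. One could alternatively phrase the argument via the characterization of $T^\rho$ in terms of normal cones alluded to earlier in the section, but the direct computation above is the most transparent route.
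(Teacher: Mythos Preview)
Your proof is correct and follows essentially the same approach as the paper: both observe that $(x,0)\in T^{\rho}$ is equivalent to $\inner{x-y}{y^*}\le 0$ for all $(y,y^*)\in T$, and then express $Z_{T^\rho}$ as an intersection of weak-closed convex half-spaces. Your version simply spells out in more detail why the pseudomonotone relation with $(x,0)$ reduces to that inequality and handles the degenerate cases explicitly.
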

\begin{proof}
It is enough to observe that $(x,0)\in T^{\rho}$ if, and only if, $\inner{x-y}{y^*}\leq 0$,  for all $(y,y^*)\in T$. Therefore, definining $C(y,y^*)=\{x\in X:~\langle x-y,y^*\rangle\leq0\}$, we have
\[
 Z_{T^\rho}= \bigcap_{(y,y^*)\in T}C(y,y^*).
\]
The proposition follows, since each $C(y,y^*)$ is weak-closed and convex.
\end{proof}
\begin{proposition}
Let $T:X\tos X^*$ be a multivalued operator. Then $Z_{T^\mu}=Z_{T^\rho}$. 
\end{proposition}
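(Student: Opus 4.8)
The plan is to prove the two inclusions $Z_{T^\mu}\subset Z_{T^\rho}$ and $Z_{T^\rho}\subset Z_{T^\mu}$ separately, using the characterization of membership of $(x,0)$ in each polar. The first inclusion is immediate: from equation~\eqref{eq:polarsubset} we have $T^\mu\subset T^\rho$, hence $Z_{T^\mu}\subset Z_{T^\rho}$ with no work at all. So the content is entirely in the reverse inclusion.

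For the reverse inclusion, I would start from the observation already recorded in the proof of Proposition~\ref{pro:zeros}: $(x,0)\in T^\rho$ if, and only if, $\inner{x-y}{y^*}\leq 0$ for all $(y,y^*)\in T$. I would derive the analogous characterization for $T^\mu$ by unwinding the definition of the monotone polar at a point of the form $(x,0)$: $(x,0)\in T^\mu$ means $\inner{x-y}{0-y^*}\geq 0$ for all $(y,y^*)\in T$, i.e. $\inner{x-y}{y^*}\leq 0$ for all $(y,y^*)\in T$. These two conditions are literally the same, so $Z_{T^\mu}=Z_{T^\rho}$.

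The only subtlety to be careful about is the definition of $\sim_p$ at a pair with zero second coordinate: when we check $(x,0)\sim_p(y,y^*)$ we need $\min\{\inner{x-y}{y^*},\inner{y-x}{0}\}<0$ or $\inner{x-y}{y^*}=\inner{y-x}{0}=0$. Since $\inner{y-x}{0}=0$ always, the min is $\min\{\inner{x-y}{y^*},0\}$, which is $<0$ exactly when $\inner{x-y}{y^*}<0$, and the equality alternative holds exactly when $\inner{x-y}{y^*}=0$; together these say precisely $\inner{x-y}{y^*}\leq 0$. This confirms the characterization of $Z_{T^\rho}$ used above and matches $Z_{T^\mu}$ term for term. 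I do not anticipate any real obstacle here — the proposition is essentially a bookkeeping consequence of the fact that $\sim_p$ and the monotone relation collapse to the same inequality when one of the functionals is $0$ (this is already foreshadowed by Example~\ref{exa:xtimes0}).

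In writing it up I would simply state: by~\eqref{eq:polarsubset}, $T^\mu\subset T^\rho$, so $Z_{T^\mu}\subset Z_{T^\rho}$; conversely, both $(x,0)\in T^\mu$ and $(x,0)\in T^\rho$ are equivalent to the single condition $\inner{x-y}{y^*}\leq 0$ for every $(y,y^*)\in T$, whence $Z_{T^\rho}\subset Z_{T^\mu}$, and the two sets coincide.
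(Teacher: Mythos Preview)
Your proposal is correct and follows essentially the same approach as the paper: both use $T^\mu\subset T^\rho$ for one inclusion, and for the reverse both invoke the characterization from Proposition~\ref{pro:zeros} that $(x,0)\in T^\rho$ amounts to $\inner{x-y}{y^*}\leq 0$ for all $(y,y^*)\in T$, then observe this is exactly the condition $(x,0)\in T^\mu$. Your extra paragraph unwinding $\sim_p$ at $(x,0)$ is a helpful sanity check but not a departure from the paper's argument.
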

\begin{proof}
Since $T^\mu\subset T^\rho$, $Z_{T^\mu}\subset Z_{T^\rho}$. On the other hand, given $x\in Z_{T^{\rho}}$, from the proof of Proposition~\ref{pro:zeros}, we have $\inner{x-y}{y^*}\leq 0$, for all $(y,y^*)\in T$. This in turn implies
\[
\inner{x-y}{0-y^*}\geq 0,\qquad\forall\,(y,y^*)\in T,
\]
that is, $(x,0)\in T^{\mu}$. This proves the proposition.
\end{proof}

Corollary~3.8 in \cite{BueCot16} shows that the images of the quasimonotone polar are conic, convex and weak$^*$-closed.  In a similar way, we have the following proposition.
\begin{proposition}
Let $T:X\tos X^*$ be multivalued operator. Then the following assertions hold.
\begin{enumerate}
\item $\cones(T)^{\rho}=T^{\rho}=\cones(T^{\rho})$. In particular $T^{\rho}(x)$ is a cone, for all $x\in X$;
\item $T^{\rho}(x)$ is a convex set, for all $x\in X$;
\end{enumerate}
\end{proposition}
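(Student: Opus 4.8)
The plan is to handle the two assertions in turn, exploiting Lemma~\ref{3.6} together with the characterization of $\sim_p$ and the special role of zero vectors described in Example~\ref{exa:xtimes0}. For item \emph{1}, I would first prove $\cones(T)^\rho = T^\rho$ and then $T^\rho = \cones(T^\rho)$; the "in particular" claim is immediate from the second equality. The inclusion $\cones(T)^\rho \subset T^\rho$ follows from item \emph{4} of Proposition~\ref{pro:polarp}, since $T \subset \cones(T)$ (take $t=1$). For the reverse, let $(x,x^*) \in T^\rho$ and take $(y, s y^*) \in \cones(T)$ with $(y,y^*)\in T$ and $s > 0$; if $s = 0$ is absorbed by the convention that $\cones$ uses strictly positive scalars, so actually only $s>0$ occurs, and then Lemma~\ref{3.6} (with $t=1$) gives $(x, x^*) \sim_p (y, s y^*)$, so $(x,x^*) \in \cones(T)^\rho$. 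For $T^\rho = \cones(T^\rho)$: the inclusion $\cones(T^\rho) \supset T^\rho$ would need $1\cdot(x,x^*)$, which is trivial, but more carefully one shows $\cones(T^\rho)\subset T^\rho$ and $T^\rho\subset\cones(T^\rho)$. Given $(x, t x^*) \in \cones(T^\rho)$ with $(x,x^*) \in T^\rho$ and $t > 0$, Lemma~\ref{3.6} (now using the scalar $t$ on the first argument and $s=1$ on the second) yields $(x, t x^*) \sim_p (y, y^*)$ for every $(y,y^*) \in T$, hence $(x, t x^*) \in T^\rho$; conversely $(x,x^*) = 1\cdot(x,x^*) \in \cones(T^\rho)$.

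For item \emph{2}, fix $x \in X$ and take $x_1^*, x_2^* \in T^\rho(x)$ and $\lambda \in [0,1]$; set $x^* = \lambda x_1^* + (1-\lambda) x_2^*$. I must show $(x, x^*) \sim_p (y, y^*)$ for every $(y, y^*) \in T$. Fix such a $(y,y^*)$. Since $(x, x_i^*) \sim_p (y,y^*)$ for $i = 1,2$, there are three cases each; the key observation is that the quantity $\inner{x-y}{y^*}$ does not depend on $i$, so the pseudomonotone relation splits according to its sign. If $\inner{x-y}{y^*} < 0$, then $\min\{\inner{x-y}{y^*}, \inner{y-x}{x^*}\} < 0$ automatically and we are done. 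If $\inner{x-y}{y^*} > 0$, then for each $i$ the relation $(x, x_i^*) \sim_p (y,y^*)$ forces $\inner{y-x}{x_i^*} < 0$; hence $\inner{y-x}{x^*} = \lambda \inner{y-x}{x_1^*} + (1-\lambda)\inner{y-x}{x_2^*} < 0$ (when $\lambda\in\{0,1\}$ this is still $\le$ the nonpositive one, but strictness needs care — if $\lambda\in(0,1)$ both terms are negative, and if $\lambda\in\{0,1\}$ the sum equals one of the two negative values), so again $(x,x^*)\sim_p(y,y^*)$. If $\inner{x-y}{y^*} = 0$, then for each $i$ the relation reduces to $\inner{y-x}{x_i^*} = 0$ (the "min $<0$" alternative would need $\inner{x-y}{y^*}<0$), so $\inner{y-x}{x^*} = 0$ as well and the second alternative of $\sim_p$ holds. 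In all cases $(x,x^*) \in T^\rho$, proving convexity.

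The main obstacle I anticipate is purely bookkeeping: in item \emph{2}, case $\inner{x-y}{y^*}>0$, one must be careful that $\inner{y-x}{x^*}$ is \emph{strictly} negative rather than merely nonpositive, which is why it matters that each $\inner{y-x}{x_i^*}$ is strictly negative (a convex combination of strictly negative reals is strictly negative), and the degenerate weights $\lambda\in\{0,1\}$ should be noted as trivial. A secondary subtlety is making sure the $\cones$ (strictly positive scalars only) versus $\cone$ distinction is handled correctly in item \emph{1}, so that $0$ never enters as a scalar multiple — this is exactly what aligns the statement with Lemma~\ref{3.6}, whose hypothesis requires $t, s > 0$. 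No weak$^*$-closedness is asserted here (unlike the quasimonotone case), so there is no topological step to worry about.
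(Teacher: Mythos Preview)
The paper does not supply a proof of this proposition; it simply states it after remarking that the analogous result for the quasimonotone polar is Corollary~3.8 in~\cite{BueCot16}. Your argument is the natural one and is essentially correct, relying on Lemma~\ref{3.6} for item~\emph{1} and a direct case analysis on the sign of $\inner{x-y}{y^*}$ for item~\emph{2}.

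There is, however, one genuine slip in your treatment of item~\emph{2}. In the case $\inner{x-y}{y^*}=0$ you claim that the alternative $\min\{\inner{x-y}{y^*},\inner{y-x}{x_i^*}\}<0$ ``would need $\inner{x-y}{y^*}<0$'', and conclude $\inner{y-x}{x_i^*}=0$. This is false: when $\inner{x-y}{y^*}=0$, the relation $(x,x_i^*)\sim_p(y,y^*)$ is satisfied as soon as $\inner{y-x}{x_i^*}<0$, since then the minimum is negative. What you can deduce is only $\inner{y-x}{x_i^*}\leq 0$ for $i=1,2$. Fortunately this is all you need: the convex combination then satisfies $\inner{y-x}{x^*}\leq 0$, and whether this is strictly negative (giving the first alternative of $\sim_p$) or zero (giving the second, since $\inner{x-y}{y^*}=0$ as well), the relation $(x,x^*)\sim_p(y,y^*)$ holds. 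With this correction your proof of item~\emph{2} goes through.
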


 A natural question is if the closure of the pseudomonotone polar coincides with the quasimonotone polar of an operator. 
The answer is negative, take for instance $T=\R\times\{0\}$. It is clear that $T={\rm cl(T^\rho)}\neq T^\nu=\R^2$.

Given $T:X\tos X^*$ a multivalued operator, consider the sets
\[
V_T(x)=\{y\in X\::\: \exists\,y^*\in T(y),\,\inner{x-y}{y^*}>0\}.
\]
and
\[
W_T(x)=\{y\in X\::\: \exists\,y^*\in T(y),\,\inner{x-y}{y^*}=0\}.
\]
Thus, we can state the following characterization of $T^{\rho}(x)$.
\begin{proposition}\label{pro:trhogen}
Let $T:X\tos X^*$ be a multivalued operator. Then, for $x\in X$,
\[
T^{\rho}(x)=N^{\circ}_{V_T(x)}(x)\cap N_{W_T(x)}(x).
\]
\end{proposition}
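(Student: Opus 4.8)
The plan is to unwind the definition of the pseudomonotone relation $\sim_p$ directly, splitting the condition $(x,x^*)\sim_p(y,y^*)$ into the ``$\min<0$'' alternative and the ``both inner products vanish'' alternative, and then sorting each $y\in X$ according to which of the three sets $V_T(x)$, $W_T(x)$, or neither it can belong to via some $y^*\in T(y)$. The key observation is that $x^*\in T^{\rho}(x)$ means that for \emph{every} $(y,y^*)\in T$ we have $(x,x^*)\sim_p(y,y^*)$, so the condition must be parsed uniformly in $y^*$.

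\medskip

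First I would prove the inclusion $T^{\rho}(x)\subset N^{\circ}_{V_T(x)}(x)\cap N_{W_T(x)}(x)$. Take $x^*\in T^{\rho}(x)$ and fix $y\in V_T(x)$, witnessed by $y^*\in T(y)$ with $\inner{x-y}{y^*}>0$. Since $(x,x^*)\sim_p(y,y^*)$ and the second alternative (both inner products zero) is ruled out by $\inner{x-y}{y^*}>0$, we must have $\min\{\inner{x-y}{y^*},\inner{y-x}{x^*}\}<0$, forcing $\inner{y-x}{x^*}<0$; thus $x^*\in N^{\circ}_{V_T(x)}(x)$. Similarly, if $y\in W_T(x)$ witnessed by $y^*\in T(y)$ with $\inner{x-y}{y^*}=0$, then the ``$\min<0$'' alternative would need $\inner{y-x}{x^*}<0$, while the second alternative needs $\inner{y-x}{x^*}=0$; either way $\inner{y-x}{x^*}\leq 0$, so $x^*\in N_{W_T(x)}(x)$. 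One should be slightly careful here: a single $y$ may lie in both $V_T(x)$ and $W_T(x)$ through different elements of $T(y)$, but the argument above handles each witness independently, so there is no conflict.

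\medskip

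For the reverse inclusion, suppose $x^*\in N^{\circ}_{V_T(x)}(x)\cap N_{W_T(x)}(x)$ and take any $(y,y^*)\in T$. I would trichotomize on the sign of $\inner{x-y}{y^*}$. If $\inner{x-y}{y^*}>0$, then $y\in V_T(x)$, so $\inner{y-x}{x^*}<0$ and the ``$\min<0$'' alternative holds. If $\inner{x-y}{y^*}<0$, then already $\inner{x-y}{y^*}<0$ gives ``$\min<0$''. If $\inner{x-y}{y^*}=0$, then $y\in W_T(x)$, so $\inner{y-x}{x^*}\leq 0$; if this inequality is strict we get ``$\min<0$'', and if it is an equality we land in the second alternative $\inner{x-y}{y^*}=\inner{y-x}{x^*}=0$. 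In all cases $(x,x^*)\sim_p(y,y^*)$, hence $x^*\in T^{\rho}(x)$.

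\medskip

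I do not anticipate a serious obstacle here; the statement is essentially a bookkeeping translation of the disjunctive definition of $\sim_p$ into the language of (strict) normal cones. The one point requiring a little attention is the vacuity convention recorded earlier in the excerpt: when $V_T(x)=\emptyset$ or $W_T(x)=\emptyset$ the corresponding (strict) normal cone is all of $X^*$, and one checks that the case analysis above degenerates correctly (e.g., if $V_T(x)=\emptyset$ the sub-case $\inner{x-y}{y^*}>0$ never occurs). Collecting the two inclusions yields the claimed identity.
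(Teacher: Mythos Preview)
Your proof is correct and follows essentially the same approach as the paper's: both prove the double inclusion by unwinding the definition of $\sim_p$ and trichotomizing on the sign of $\inner{x-y}{y^*}$. Your additional remarks on the vacuity convention and on a single $y$ possibly lying in both $V_T(x)$ and $W_T(x)$ are correct clarifications, but the core argument is identical.
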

\begin{proof}
Let $x^*\in T^{\rho}(x)$. If $y\in V_T(x)$ then there exists $y^*\in T(y)$ such that $\inner{x-y}{y^*}>0$. Therefore $(x,x^*)\sim_p (y,y^*)$ so $\inner{y-x}{x^*}<0$. Thus $x^*\in N^{\circ}_{V_T(x)}(x)$, since $y\in V_T(x)$ was arbitrary . On the other hand, if $y\in W_T(x)$, then there exists $y^*\in T(y)$ such that $\inner{x-y}{y^*}=0$. Again, we have $(x,x^*)\sim_p (y,y^*)$, so $\inner{y-x}{x^*}\leq 0$ and $x^*\in N_{W_T(x)}(x)$.

Conversely, take $x^*\in N^{\circ}_{V_T(x)}(x)\cap N_{W_T(x)}(x)$ and $(y,y^*)\in T$. If $\inner{x-y}{y^*}<0$ then clearly $(x,x^*)\sim_p(y,y^*)$.  Now assume that $\inner{x-y}{y^*}=0$, that is, $y\in W_T(x)$. Since $x^*\in N_{W_T(x)}(x)$, $\inner{y-x}{x^*}\leq 0$. This also implies that $(x,x^*)\sim_p(y,y^*)$. Finally, assume that $\inner{x-y}{y^*}>0$, that is $y\in V_T(x)$. In this case, $x^*\in N^{\circ}_{V_T(x)}(x)$ implies $\inner{y-x}{x^*}<0$, which in turn implies $(x,x^*)\sim_p(y,y^*)$. Hence, we conclude $(x,x^*)\in T^{\rho}$.
\end{proof}

\begin{corollary}\label{cor:trhozeros}
Let $T:X\tos X^*$ be a multivalued operator and let $x\in X$. The following are equivalent:
\begin{enumerate}
\item $V_T(x)=\emptyset$,
\item $x\in Z_{T^{\rho}}$,
\item $T^{\rho}(x)=N_{W_T(x)}(x)$,
\item $x\in\dom(T^{\rho})$ and $T^{\rho}(x)$ is weak$^*$-closed.
\end{enumerate}
\end{corollary}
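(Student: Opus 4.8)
The plan is to prove the cycle of implications $(1)\Rightarrow(2)\Rightarrow(3)\Rightarrow(4)\Rightarrow(1)$, leaning throughout on the decomposition $T^{\rho}(x)=N^{\circ}_{V_T(x)}(x)\cap N_{W_T(x)}(x)$ from Proposition~\ref{pro:trhogen} and on the vacuity conventions $N_{\emptyset}(x)=N^{\circ}_{\emptyset}(x)=X^*$.

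For $(1)\Rightarrow(2)$: if $V_T(x)=\emptyset$, then by Proposition~\ref{pro:trhogen} and the vacuity convention, $T^{\rho}(x)=X^*\cap N_{W_T(x)}(x)=N_{W_T(x)}(x)$; since a normal cone always contains $0$, we get $0\in T^{\rho}(x)$, i.e.\ $x\in Z_{T^{\rho}}$. Note this argument simultaneously delivers $(1)\Rightarrow(3)$. For $(2)\Rightarrow(1)$ (which I'll fold into the cycle via $(3)\Rightarrow(4)\Rightarrow(1)$, but it is cleaner to observe it directly): recall from the proof of Proposition~\ref{pro:zeros} that $x\in Z_{T^{\rho}}$ is equivalent to $\inner{x-y}{y^*}\le 0$ for all $(y,y^*)\in T$; this says exactly that no $y$ witnesses strict positivity, i.e.\ $V_T(x)=\emptyset$. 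So $(1)\Leftrightarrow(2)$ is immediate, and as noted $(1)\Rightarrow(3)$.

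For $(3)\Rightarrow(4)$: if $T^{\rho}(x)=N_{W_T(x)}(x)$, then $0\in T^{\rho}(x)$ so $x\in\dom(T^{\rho})$, and a normal cone of any (convex) set is weak$^*$-closed — being an intersection of weak$^*$-closed half-spaces $\{x^*:\inner{y-x}{x^*}\le 0\}$ — so $T^{\rho}(x)$ is weak$^*$-closed. The remaining and only substantive step is $(4)\Rightarrow(1)$, which I would prove by contraposition: suppose $V_T(x)\neq\emptyset$, say $y_0\in V_T(x)$ with $y_0^*\in T(y_0)$ and $\inner{x-y_0}{y_0^*}>0$. Then every $x^*\in T^{\rho}(x)$ satisfies the \emph{strict} inequality $\inner{y_0-x}{x^*}<0$, so $0\notin T^{\rho}(x)$; this already shows $T^{\rho}(x)$ cannot contain $0$, but to contradict $(4)$ I need more: I must show that if $\dom(T^{\rho})$ at $x$ is nonempty then $T^{\rho}(x)$ fails to be weak$^*$-closed. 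The point is that $T^{\rho}(x)$ is a cone (Proposition after Proposition~\ref{pro:zeros}'s successor, i.e.\ the conicity statement) lying in the open half-space $\{x^*:\inner{y_0-x}{x^*}<0\}$; a nonempty cone contained in an open half-space through the origin has $0$ in its weak$^*$-closure but not in itself, hence is not weak$^*$-closed. So assuming $(4)$, $T^{\rho}(x)\neq\emptyset$ forces $V_T(x)=\emptyset$.

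The main obstacle is precisely this last implication: one has to combine conicity of $T^{\rho}(x)$ with the strict separation coming from a point of $V_T(x)$ to conclude non-weak$^*$-closedness, rather than merely $0\notin T^{\rho}(x)$. The rest is bookkeeping with the vacuity conventions and the half-space description of normal cones. I would present the final write-up as the single cycle $(1)\Rightarrow(2)\Rightarrow(3)\Rightarrow(4)\Rightarrow(1)$, inserting the short direct remarks above at the appropriate arrows.
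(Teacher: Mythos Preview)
Your proposal is correct and essentially the same as the paper's proof. The paper runs the cycle in the order $(1)\Rightarrow(3)\Rightarrow(4)\Rightarrow(2)\Rightarrow(1)$: for $(4)\Rightarrow(2)$ it picks any $x^*\in T^{\rho}(x)$, uses conicity to get $tx^*\in T^{\rho}(x)$ for $t>0$, and lets $t\to 0^+$ to force $0\in T^{\rho}(x)$ by weak$^*$-closedness---which is exactly your ``nonempty cone in an open half-space is not weak$^*$-closed'' argument recast in the direct rather than contrapositive direction; the remaining implications are handled identically to yours.
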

\begin{proof}
If $V_T(x)=\emptyset$ then $N^{\circ}_{V_T(x)}(x)=X^*$ thus implying {\it 3}. Since a normal cone is always non-empty and weak$^*$-closed, {\it 3} implies {\it 4}. Now assume {\it 4} and take $x^*\in T^{\rho}(x)$. If $x^*=0$ then $x\in Z_{T^{\rho}}$, so assume that $x^*\neq 0$. Since $T^{\rho}(x)$ is a cone, $tx^*\in T^{\rho}(x)$, for all $t>0$. Taking the limit when $t\to 0^+$, as $T^{\rho}(x)$ is weak$^*$-closed, we conclude that $0\in T^{\rho}(x)$. Finally, assuming {\it 2}, from Proposition~\ref{pro:trhogen} we obtain $0\in N^{\circ}_{V_T(x)}(x)$, which is absurd, unless $V_T(x)=\emptyset$.
\end{proof}

\section{On pseudomonotone operators}
Recall that an operator $T:X\tos X^*$ is \emph{pseudomonotone} if, for every $(x,x^*),(y,y^*)\in T$, the following implication holds
\[
\inner{y-x}{x^*}\geq0\quad\Longrightarrow\quad \inner{y-x}{y^*}\geq 0,
\]
or, equivalently, every $(x,x^*),(y,y^*)\in T$  are pseudomonotonically related, that is,
\[
\min\{\inner{y-x}{x^*},\inner{x-y}{y^*}\}< 0 \mbox{ or } \inner{y-x}{x^*}=\inner{x-y}{y^*}=0.
\]

As was in the monotone and quasimonotone case, the pseudomonotone polar provides a way of characterizing pseudomonotonicity.
\begin{proposition}\label{pro:4.1} Let $T:X\tos X^*$. The following conditions are equivalent:
\begin{enumerate}
 \item $T$ is pseudomonotone,
 \item $T\subset T^{\rho}$,
 \item $T^{\rho\rho}\subset T^{\rho}$,
 \item $T^{\rho\rho}$ is pseudomonotone,
 \item $\cones(T)$ is pseudomonotone.
\end{enumerate}
\end{proposition}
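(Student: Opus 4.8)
The plan is to prove the equivalences in three clusters. The first, $1\Leftrightarrow 2$, is essentially a restatement of the definitions. The second, $2\Leftrightarrow 3\Leftrightarrow 4$, relies only on the fact that $T\mapsto T^\rho$ is a polarity, as recorded in Proposition~\ref{pro:polarp}. The third, $2\Leftrightarrow 5$, uses the earlier proposition establishing $\cones(T)^\rho=T^\rho=\cones(T^\rho)$.

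For $1\Leftrightarrow 2$: unwinding the definition of pseudomonotonicity, $T$ is pseudomonotone exactly when $(x,x^*)\sim_p(y,y^*)$ holds for all $(x,x^*),(y,y^*)\in T$. Since $\sim_p$ is symmetric, this is verbatim the condition that each $(x,x^*)\in T$ satisfies $(x,x^*)\sim_p(y,y^*)$ for every $(y,y^*)\in T$, i.e. $T\subset T^\rho$. This mirrors the monotone and quasimonotone characterizations and needs nothing beyond the definitions.

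For the polarity cluster, I would argue as follows. If $T\subset T^\rho$, the inclusion-reversing property (Proposition~\ref{pro:polarp}, item 4) gives $T^{\rho\rho}\subset T^\rho$, so $2\Rightarrow 3$; conversely, $T\subset T^{\rho\rho}$ always holds (item 2), so $3$ yields $T\subset T^{\rho\rho}\subset T^\rho$, i.e. $3\Rightarrow 2$. For $3\Leftrightarrow 4$, apply the already-established equivalence $1\Leftrightarrow 2$ to the operator $T^{\rho\rho}$: it is pseudomonotone iff $T^{\rho\rho}\subset(T^{\rho\rho})^\rho=T^{\rho\rho\rho}=T^\rho$ (using item 3), which is exactly condition $3$. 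This closes the loop $1\Leftrightarrow 2\Leftrightarrow 3\Leftrightarrow 4$.

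Finally, for $2\Leftrightarrow 5$: first, $T\subset\cones(T)$ (take the scalar $t=1$), so if $\cones(T)$ is pseudomonotone then $\cones(T)\subset\cones(T)^\rho=T^\rho$ forces $T\subset T^\rho$. Conversely, if $T\subset T^\rho$, applying $\cones(\cdot)$ preserves the inclusion, giving $\cones(T)\subset\cones(T^\rho)=T^\rho=\cones(T)^\rho$, which by the $1\Leftrightarrow 2$ equivalence (now applied to $\cones(T)$) says $\cones(T)$ is pseudomonotone. Alternatively, one can observe directly that $(x,x^*)\sim_p(y,y^*)$ is invariant under replacing $x^*$ and $y^*$ by arbitrary positive multiples — this is the content of Lemma~\ref{3.6} — which identifies the pseudomonotonicity of $T$ with that of $\cones(T)$ in one line. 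I do not expect a real obstacle here: the only points that need care are the direction of the inclusion $T\subset\cones(T)$ and the bookkeeping with the triple polar $T^{\rho\rho\rho}=T^\rho$; the rest is a direct unwinding of the definitions and of the polarity identities from Section~3.
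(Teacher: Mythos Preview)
Your argument is correct. In fact, the paper states Proposition~\ref{pro:4.1} without proof, treating it as a routine consequence of the polarity framework and the definitions, parallel to the monotone and quasimonotone cases; your write-up supplies exactly the expected details, relying on Proposition~\ref{pro:polarp} (items 2--4) and the identity $\cones(T)^\rho=T^\rho=\cones(T^\rho)$ in the intended way.

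One small remark on your alternative route for $2\Leftrightarrow 5$: Lemma~\ref{3.6} is phrased with one point in $T^\rho$ and the other in $T$, so it is not literally the statement that $\sim_p$ is preserved under positive rescaling of both arguments. That invariance is nonetheless immediate from the definition of $\sim_p$ (positive scalars preserve the signs and the simultaneous vanishing of the two inner products), so the alternative argument goes through; just do not cite Lemma~\ref{3.6} for it.
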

\begin{corollary}\label{123}
Let $T:X\tos X^*$ be a pseudomonotone operator. Then $(x,x^*)\in T^\rho$ if, and only if, $T\cup\{(x,x^*)\}$ is pseudomonotone.
\end{corollary}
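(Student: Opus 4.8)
The plan is to unwind the definition of pseudomonotonicity of the enlarged operator and compare it directly with the definition of the polar, using the characterization in Proposition~\ref{pro:4.1}. Set $S=T\cup\{(x,x^*)\}$. Recall that $S$ is pseudomonotone exactly when every pair of its elements is pseudomonotonically related; since $\sim_p$ is reflexive and symmetric (as observed right after its definition), the only pairs that need to be examined are the pairs of elements of $T$ and the pairs of the form $\{(x,x^*),(y,y^*)\}$ with $(y,y^*)\in T$.

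For the forward implication, I would assume $(x,x^*)\in T^{\rho}$. Since $T$ is pseudomonotone, Proposition~\ref{pro:4.1} yields $T\subset T^{\rho}$, so any two elements of $T$ are pseudomonotonically related; and $(x,x^*)\in T^{\rho}$ means precisely $(x,x^*)\sim_p(y,y^*)$ for every $(y,y^*)\in T$. Combined with reflexivity of $\sim_p$, this shows that every pair of elements of $S$ is pseudomonotonically related, hence $S$ is pseudomonotone. For the converse, assume $S$ is pseudomonotone. Then for every $(y,y^*)\in T\subset S$, the pair $\{(x,x^*),(y,y^*)\}$ consists of two elements of $S$, so $(x,x^*)\sim_p(y,y^*)$; as $(y,y^*)\in T$ is arbitrary, $(x,x^*)\in T^{\rho}$ by definition of the pseudomonotone polar.

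I do not expect any genuine obstacle: the statement is essentially a reformulation of the definitions once one records that $\sim_p$ is reflexive and symmetric. The only nontrivial input is the equivalence of items \emph{1} and \emph{2} in Proposition~\ref{pro:4.1}, which is what guarantees that the pairs internal to $T$ are already in pseudomonotone relation and therefore play no role in the argument.
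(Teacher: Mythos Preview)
Your proof is correct and matches the paper's approach: the paper states Corollary~\ref{123} without proof, treating it as an immediate consequence of Proposition~\ref{pro:4.1} and the definition of $T^{\rho}$, which is exactly the unwinding you carry out.
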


\begin{example}\label{ejem-variational}
 Let $T=(\R\times\{0\})\cup\{(0,1)\}$. Straightforward calculations show that
\[
 T^{\nu}=\{(x,x^*)\in\R^2\::\:x\leq 0\text{ or }x^*\geq 0\}\quad\text{and}\quad
 T^{\rho}=\{(x,0)\in\R^2\::\:x\leq 0\}.
\]
Therefore, $T$ is quasimonotone but not pseudomonotone. Note, in addition, that $T^{\rho}$ is monotone.  
\end{example}


Joining together Propositions~\ref{pro:zeros} and \ref{pro:4.1}, we obtain the following corollary.
\begin{corollary}
Let $T:X\tos X^*$ be a pseudomonotone operator. Then  $\overline{\co}^{w}(Z_T)\subset Z_{T^\rho}$.
\end{corollary}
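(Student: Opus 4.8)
The plan is to assemble this directly from the two results just proved, since the corollary is essentially their conjunction. The first step is to invoke Proposition~\ref{pro:4.1}: because $T$ is pseudomonotone, we have the inclusion $T\subset T^{\rho}$.

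Next I would transfer this inclusion of operators to an inclusion of their sets of zeros. If $x\in Z_T$, then $(x,0)\in T$ by definition of $Z_T$, hence $(x,0)\in T^{\rho}$ by the inclusion above, which means precisely that $x\in Z_{T^{\rho}}$. Therefore $Z_T\subset Z_{T^{\rho}}$.

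Finally, Proposition~\ref{pro:zeros} tells us that $Z_{T^{\rho}}$ is a weak-closed convex set. Since $\overline{\co}^{w}(Z_T)$ is by definition the smallest weak-closed convex set containing $Z_T$, and $Z_{T^{\rho}}$ is such a set, we conclude $\overline{\co}^{w}(Z_T)\subset Z_{T^{\rho}}$, as desired.

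There is no real obstacle here; the only point requiring a moment's care is the passage from $T\subset T^{\rho}$ to $Z_T\subset Z_{T^{\rho}}$, which is the trivial observation above, together with the recognition that the minimality property of the weak-closed convex hull is exactly what is needed to absorb the closure and convexity. One could alternatively phrase the last step by noting that taking convex hull and then weak closure is a monotone operation, applied to the inclusion $Z_T\subset Z_{T^{\rho}}$ and using that $Z_{T^{\rho}}$ already equals $\overline{\co}^{w}(Z_{T^{\rho}})$.
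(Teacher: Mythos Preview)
Your proof is correct and follows exactly the approach the paper intends: the corollary is stated as an immediate consequence of Propositions~\ref{pro:zeros} and~\ref{pro:4.1}, and you have spelled out precisely that combination.
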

The latter inclusion is strict
 in general, as shown by the following example.
\begin{example}\label{ejem1}
Let $T:\R\tos\R$ be a multivalued operator defined as $T=(\R_-\times\{-1\})\cup(\R_+\times\{1\})$.
Clearly, $T$ is pseudomonotone and $Z_T=\emptyset$. 
However, 
\[
T^{\rho}=\{(x,x^*)\in\R^2\::\:xx^*>0\text{ or }x=0\}
\]
and $Z_{T^{\rho}}=\{0\}$. 
\end{example} 

\begin{proposition}\label{pseudo-dom} 
Let $T:X\tos X^*$ be a multivalued operator. If 
  $\co(\dom(T))\subset \dom(T^\rho)$ then $T$ is pseudomonotone.
\end{proposition}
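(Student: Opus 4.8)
The plan is to prove the contrapositive-free version directly: verify the inclusion $T\subset T^{\rho}$, which by Proposition~\ref{pro:4.1} is equivalent to the pseudomonotonicity of $T$. So I would fix arbitrary pairs $(x,x^*),(y,y^*)\in T$ and show that $(x,x^*)\sim_p(y,y^*)$. The idea is that the hypothesis $\co(\dom(T))\subset\dom(T^{\rho})$ guarantees that a point strictly between $x$ and $y$ lies in $\dom(T^{\rho})$, and such a point, together with the weak transitivity of $\sim_p$ along segments (Proposition~\ref{pro:pseudo-new}), forces $x$ and $y$ to be pseudomonotonically related.

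Concretely, I would first dispose of the degenerate case $x=y$: then $\inner{x-y}{y^*}=\inner{y-x}{x^*}=0$, so $(x,x^*)\sim_p(y,y^*)$ trivially. Assuming $x\neq y$, set $z=\tfrac12(x+y)$, so that $z=tx+(1-t)y$ with $t=\tfrac12\in\,]0,1[$. Since $x,y\in\dom(T)$, we have $z\in\co(\dom(T))\subset\dom(T^{\rho})$, hence there exists $z^*\in T^{\rho}(z)$. Applying Lemma~\ref{3.6} twice, with $t=s=1$ — once to the pair $(x,x^*)\in T$ and once to $(y,y^*)\in T$ — yields $(z,z^*)\sim_p(x,x^*)$ and $(z,z^*)\sim_p(y,y^*)$.

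Finally, since $z\in[x,y]$ and, by symmetry of $\sim_p$, $(x,x^*)\sim_p(z,z^*)$ and $(z,z^*)\sim_p(y,y^*)$, Proposition~\ref{pro:pseudo-new} applies to the triple $(x,x^*),(z,z^*),(y,y^*)$ and gives $(x,x^*)\sim_p(y,y^*)$. As $(x,x^*),(y,y^*)\in T$ were arbitrary, $T\subset T^{\rho}$, and Proposition~\ref{pro:4.1} concludes that $T$ is pseudomonotone.

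I do not anticipate a genuine obstacle: the two earlier results (Lemma~\ref{3.6} and Proposition~\ref{pro:pseudo-new}) essentially carry the whole argument. The only points requiring a little care are (i) treating $x=y$ separately so that the intermediate point $z$ is a true interior point $tx+(1-t)y$ with $t\in\,]0,1[$, as Proposition~\ref{pro:pseudo-new} requires, and (ii) observing that one does not need the whole segment $[x,y]$ inside $\dom(T^{\rho})$ — a single interior point suffices — which is why the convex-hull hypothesis on $\dom(T)$ is exactly what is needed.
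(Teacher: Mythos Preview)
Your proof is correct and follows essentially the same route as the paper's: pick the midpoint $z=\tfrac12(x+y)\in\co(\dom(T))\subset\dom(T^{\rho})$, take $z^*\in T^{\rho}(z)$, and apply Proposition~\ref{pro:pseudo-new}. The only minor differences are cosmetic: the paper obtains $(z,z^*)\sim_p(x,x^*)$ and $(z,z^*)\sim_p(y,y^*)$ directly from the definition of $T^{\rho}$ rather than invoking Lemma~\ref{3.6} with $t=s=1$ (which collapses to that definition anyway), and it does not separate out the case $x=y$ since Proposition~\ref{pro:pseudo-new} already covers the endpoint situation trivially.
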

\begin{proof}
Take $(x,x^*),(y,y^*)\in T$ and let 
$z=
(x+y)/2 
\in[x,y]\subset\co(\dom(T))$. 
Then $z\in \dom(T^{\rho})$ and there exists $z^*\in T^{\rho}(z)$. In particular, $(z,z^*)$ is pseudomonotonically related to both $(x,x^*)$ and $(y,y^*)$. Therefore, by Proposition~\ref{pro:pseudo-new}, $(x,x^*)$ and $(y,y^*)$ are pseudomonotonically related. The proposition follows.
\end{proof}
%

The following example shows that we cannot drop the condition of taking the convex hull of the domain in the last proposition.

\begin{example}
Let $T=\{(0,1), (1,0)\}$. Then $T^{\rho}=(]-\infty,0]\times\R_-)\cup([1,+\infty[\times \R_{++})$ and $\dom(T)=\{0,1\}\subset\R\setminus]0,1[=\dom(T^{\rho})$. However $T$ is not pseudomonotone.
\end{example}

As a direct consequence of Proposition \ref{pseudo-dom} we have the following corollary.
\begin{corollary}\label{cor:pseudo}
Let $T:X\tos X^*$ be a multivalued operator. If $\dom(T^\rho)=X$ then $T$ is pseudomonotone. 
\end{corollary}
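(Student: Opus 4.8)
The plan is to derive Corollary~\ref{cor:pseudo} as an immediate specialization of Proposition~\ref{pseudo-dom}. The key observation is that the hypothesis $\dom(T^\rho)=X$ makes the containment $\co(\dom(T))\subset\dom(T^\rho)$ automatic, because the convex hull of any subset of $X$ is contained in $X$ itself.

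Concretely, I would argue as follows. Since $\dom(T)\subset X$ and $X$ is convex (being a vector space), we have $\co(\dom(T))\subset X=\dom(T^\rho)$, where the last equality is the hypothesis. Thus the assumption of Proposition~\ref{pseudo-dom} is satisfied, and that proposition yields directly that $T$ is pseudomonotone. This is essentially a one-line deduction, so the "proof" is just this remark.

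There is no real obstacle here; the only thing worth being slightly careful about is the degenerate case $\dom(T)=\emptyset$, but then $\co(\dom(T))=\emptyset\subset\dom(T^\rho)$ still holds and Proposition~\ref{pseudo-dom} applies (an operator with empty graph is vacuously pseudomonotone). So the corollary follows in full generality. The proof I would write is simply:

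\begin{proof}
Since $\co(\dom(T))\subset X=\dom(T^\rho)$, the result follows immediately from Proposition~\ref{pseudo-dom}.
\end{proof}
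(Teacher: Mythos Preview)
Your proposal is correct and matches the paper's approach exactly: the paper presents this corollary as a direct consequence of Proposition~\ref{pseudo-dom}, precisely via the observation that $\co(\dom(T))\subset X=\dom(T^\rho)$.
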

\begin{remark}
Propositions~\ref{pro:pseudo-new} and \ref{pseudo-dom} and Corollary~\ref{cor:pseudo} can be restated for monotonicity instead of pseudomonotonicity, with similar proofs.
\end{remark}

We say that $T$ is \emph{maximal pseudomonotone} if $T$ is pseudomonotone and it is maximal in the sense of inclusion, that is, 
if $T\subset S$ and $S$ is pseudomonotone, then $T=S$.  A direct consequence of Zorn's Lemma shows that every pseudomonotone 
operator must have a maximal pseudomonotone extension, that is, a maximal pseudomonotone operator which contains it.  

\begin{proposition}\label{pro:maxpseu}
	Let $T:X\tos X^*$ be a pseudomonotone operator and let $P(T)$ be the set of maximal pseudomonotone extensions of $T$. Then
	\begin{enumerate}
		\item $T^{\rho}=\displaystyle\bigcup_{M\in P(T)}M$;
		\item $T^{\rho\rho}=\displaystyle\bigcap_{M\in P(T)}M$;
		\item $T$ is maximal pseudomonotone if, and only if, $T=T^{\rho}$.
	\end{enumerate}
\end{proposition}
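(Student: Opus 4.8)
The plan is to prove the three items in an interlocking way, using the polarity machinery already established. For item \textit{1}, I would prove both inclusions. For the inclusion $\bigcup_{M\in P(T)}M\subset T^{\rho}$: if $M$ is a maximal pseudomonotone extension of $T$, then $M$ is pseudomonotone, so by Proposition~\ref{pro:4.1} we have $M\subset M^{\rho}$; since $T\subset M$, item \textit{4} of Proposition~\ref{pro:polarp} gives $M^{\rho}\subset T^{\rho}$, hence $M\subset T^{\rho}$. For the reverse inclusion $T^{\rho}\subset\bigcup_{M\in P(T)}M$: take $(x,x^*)\in T^{\rho}$. By Corollary~\ref{123}, $T\cup\{(x,x^*)\}$ is pseudomonotone, and by the Zorn's Lemma argument recalled just before the proposition, it admits a maximal pseudomonotone extension $M$. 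Then $T\subset M$ so $M\in P(T)$, and $(x,x^*)\in M$. This settles item \textit{1}.

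For item \textit{2}, apply the polarity to item \textit{1}: using item \textit{1} of Proposition~\ref{pro:polarp},
\[
T^{\rho\rho}=\left(\bigcup_{M\in P(T)}M\right)^{\rho}=\bigcap_{M\in P(T)}M^{\rho}.
\]
Now each $M$ is maximal pseudomonotone, so I need $M^{\rho}=M$ for each $M\in P(T)$ — but that is exactly item \textit{3} applied to $M$, so the cleanest route is to prove item \textit{3} first (it only needs items \textit{1} and the earlier results), then feed it back into the displayed equation to obtain $T^{\rho\rho}=\bigcap_{M\in P(T)}M$.

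So I would order the argument as: item \textit{1}, then item \textit{3}, then item \textit{2}. For item \textit{3}: if $T=T^{\rho}$, then in particular $T\subset T^{\rho}$, so $T$ is pseudomonotone by Proposition~\ref{pro:4.1}; and if $T\subset S$ with $S$ pseudomonotone, then $S\subset S^{\rho}\subset T^{\rho}=T$, forcing $S=T$, so $T$ is maximal pseudomonotone. Conversely, if $T$ is maximal pseudomonotone, then $T$ is pseudomonotone so $T\subset T^{\rho}$; for the reverse, any $(x,x^*)\in T^{\rho}$ makes $T\cup\{(x,x^*)\}$ pseudomonotone by Corollary~\ref{123}, and maximality forces $(x,x^*)\in T$, so $T^{\rho}\subset T$. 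Hence $T=T^{\rho}$.

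The only genuine subtlety — the main obstacle — is making sure Corollary~\ref{123} is legitimately available: it asserts that for a pseudomonotone $T$, $(x,x^*)\in T^{\rho}$ iff $T\cup\{(x,x^*)\}$ is pseudomonotone, and this is the real engine behind both the "$\subset$" direction of item \textit{1} and the "$\Leftarrow$" direction of item \textit{3}. Everything else is formal manipulation of the polarity identities from Proposition~\ref{pro:polarp} together with the characterization in Proposition~\ref{pro:4.1}; no new estimates or topological arguments are needed, and the finiteness or non-emptiness of $P(T)$ causes no trouble since $P(T)\neq\emptyset$ by the Zorn's Lemma remark and the intersection/union operations are over a genuinely non-empty index set.
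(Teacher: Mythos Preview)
Your argument is correct. The paper actually states Proposition~\ref{pro:maxpseu} without proof (presumably because it is the direct analogue of the corresponding results for the monotone and quasimonotone polars in~\cite{BSML,BueCot16}), so there is no explicit proof to compare against; but your route---item~\textit{1} via Corollary~\ref{123} and Zorn, then item~\textit{3} via Corollary~\ref{123} and maximality, then item~\textit{2} by applying item~\textit{1} of Proposition~\ref{pro:polarp} to item~\textit{1} and invoking item~\textit{3} on each $M$---is exactly the standard polarity argument the paper's machinery is set up to deliver, and is almost certainly what the authors intend.
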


From Example~\ref{exa:xtimes0}, the operator $T=X\times\{0\}$ satisfies $T=T^{\rho}$, so $T$ is both maximal pseudomonotone and maximal monotone.  The following proposition shows that this operator is the only one with this property.

\begin{proposition}
Let $T:X\tos X^*$ be a monotone operator. Then $T$ is maximal pseudomonotone if, and only if, $T=X\times\{0\}$.
\end{proposition}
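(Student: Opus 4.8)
The plan is to prove both implications, with the nontrivial direction being that a maximal pseudomonotone \emph{monotone} operator must equal $X\times\{0\}$. For the easy direction, if $T = X\times\{0\}$ then by Example~\ref{exa:xtimes0} we have $T^\rho = X\times\{0\} = T$, so $T$ is maximal pseudomonotone by Proposition~\ref{pro:maxpseu}(3); and it is trivially monotone.

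For the converse, suppose $T$ is monotone and maximal pseudomonotone. By Proposition~\ref{pro:maxpseu}(3), $T = T^\rho$. Since $T$ is monotone, $T \subset T^\mu$, and combining with \eqref{eq:polarsubset} ($T^\mu \subset T^\rho$) gives $T \subset T^\mu \subset T^\rho = T$, hence $T = T^\mu = T^\rho$; in particular $T$ is maximal monotone as well. The goal is now to show $T(x) = \{0\}$ for every $x\in X$. First I would argue that $\dom(T) = X$: since $T^\rho = T$ and (from Example~\ref{exa:xtimes0}) $X\times\{0\}$ is pseudomonotone with $(X\times\{0\})^\rho = X\times\{0\}$, one can try adding a point $(x_0,0)$ to $T$ for $x_0\notin\dom(T)$ and use Corollary~\ref{123} together with maximality to force a contradiction — or more directly, observe that for any $x_0$, $(x_0,0)$ is pseudomonotonically related to every $(y,y^*)\in T$ precisely when $\langle x_0 - y, y^*\rangle \le 0$ for all $(y,y^*)\in T$ (this is the computation from the proof of Proposition~\ref{pro:zeros}), i.e. when $x_0\in Z_{T^\rho} = Z_{T^\mu} = Z_T$. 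So the set of $x_0$ with $(x_0,0)\in T^\rho$ is exactly $Z_T$; since $T = T^\rho$ this means $T(x) \ni 0 \iff x\in Z_T$, which is a tautology and does not yet give $\dom(T)=X$.

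The cleaner route: take any $x\in\dom(T)$ and any $x^*\in T(x)$; I want $x^* = 0$. The key fact is that $X\times\{0\}$ is pseudomonotone and contains no strict enlargement inside the pseudomonotone operators that keeps it "compatible" — more precisely, I would show $X\times\{0\} \cup \{(x,x^*)\}$ is pseudomonotone for this pair (which it is, since by Example~\ref{exa:xtimes0} $(X\times\{0\})^\rho = X\times\{0\}$ means: $(x,x^*)$ is pseudomonotonically related to all $(y,0)$ iff $\langle y-x, x^*\rangle \le 0$ for all $y\in X$ in the $W$-part... wait, one must be careful: $(x,x^*)\sim_p (y,0)$ requires $\min\{\langle x-y,0\rangle, \langle y-x,x^*\rangle\}<0$ or both $=0$, i.e. $\langle y-x,x^*\rangle < 0$ or $\langle y-x,x^*\rangle = 0$, i.e. $\langle y-x,x^*\rangle \le 0$ for all $y$, forcing $x^*=0$). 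This shows $(X\times\{0\})^\rho = X\times\{0\}$ directly and, dually, that any pseudomonotone operator containing a point $(x,x^*)$ with $x^*\neq 0$ cannot contain all of $X\times\{0\}$. But I need the reverse containment $X\times\{0\} \subset T$.

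To get $X\times\{0\}\subset T$: since $T$ is maximal monotone, for this I would use that $T^\rho = T$ and test the point $(x_0, 0)$ for arbitrary $x_0$. We showed $(x_0,0)\in T^\rho \iff x_0 \in Z_T$. So it suffices to prove $Z_T = X$. Now here is where monotonicity does real work: $Z_T = Z_{T^\mu}$ is convex and weak-closed (Proposition~\ref{pro:zeros} applied to the monotone polar, or directly). Suppose $Z_T \neq X$; pick $x_1\notin Z_T$. Monotonicity of $T$ at any $(y,y^*)\in T$ against a hypothetical $(x_1,0)$ fails, meaning there is $(y,y^*)\in T$ with $\langle x_1 - y, y^*\rangle > 0$ (strictly), so $V_T(x_1)\neq\emptyset$. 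But since $\dom(T)$ might be small, the obstruction is genuinely whether $\dom(T)$ can fail to be all of $X$. \textbf{The main obstacle} is precisely this: ruling out a monotone, maximal pseudomonotone operator whose domain is a proper subset (e.g., could $T = \{(0,0)\}$ on $X=\R$ be maximal pseudomonotone? Check: $T^\rho = \{(0,0)\}^\rho = \{(0,0)\}^\mu = \{(x,x^*): \langle x, x^*\rangle \ge 0\}\supsetneq T$, so no — it is not maximal). This computation suggests the right argument: for any single point $(x,x^*)\in T$, $\{(x,x^*)\}^\rho = \{(x,x^*)\}^\mu \supset T^\rho = T$, and $\{(x,x^*)\}^\mu = \{(y,y^*): \langle x-y, x^*-y^*\rangle \ge 0\}$; in particular $(x, x^* + x^*)\in\{(x,x^*)\}^\mu$ forces... actually taking $y = x$: $(x,y^*)\in\{(x,x^*)\}^\mu$ for \emph{all} $y^*$, so $\{x\}\times X^* \subset \{(x,x^*)\}^\mu \supset$ nothing useful. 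Instead: $T\subset\{(x,x^*)\}^\mu$ for every $(x,x^*)\in T$ just restates monotonicity. I would therefore close the argument by: (i) $T$ maximal monotone and maximal pseudomonotone with $T=T^\mu=T^\rho$; (ii) if $x^*\in T(x)$ with $x^*\ne 0$, consider $S = T\cup(X\times\{0\})$ — show $S$ is pseudomonotone using Proposition~\ref{pro:pseudo-new} and the fact that $\co(\dom S) = X = \dom(X\times\{0\}) \subset \dom(S^\rho)$? No — use instead: every pair in $S$ is pseudomonotonically related: pairs within $T$ (pseudomonotone), within $X\times\{0\}$ (pseudomonotone), and mixed pairs $(x,x^*)\in T$, $(y,0)$: need $\langle y-x,x^*\rangle\le 0$ or... this need not hold. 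So $S$ need not be pseudomonotone, and the direct enlargement fails — confirming that the real content is $\dom(T)=X$ forcing, via $0\in T^\mu(x)$-type reasoning and maximal monotonicity, that $T=X\times\{0\}$. I expect the intended short proof is: $T=T^\mu$ maximal monotone, and $T=T^\rho$; for every $x\in X$, $x\in Z_{T^\rho}=Z_T$ would finish it, so show $Z_{T^\rho}=X$ by noting $T^\rho$ is pseudomonotone and $T^\rho = T^{\rho\rho}$ (Proposition~\ref{pro:maxpseu}(2) with $P(T)=\{T\}$) forces, via Corollary~\ref{cor:trhozeros}, that $V_T(x)=\emptyset$ for all $x$ because any $(x,x^*)\in T=T^\mu$ with $x^*\ne 0$ produces (by monotonicity plus picking suitable test points) a violation; I will make this last deduction precise in the write-up, and I expect it to be the one genuinely delicate step.
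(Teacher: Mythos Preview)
Your ``if'' direction and the first reduction in the ``only if'' direction are correct: from $T\subset T^\mu\subset T^\rho=T$ you rightly conclude that $T$ is simultaneously maximal monotone and maximal pseudomonotone, with $T=T^\mu=T^\rho$. You also correctly isolate the crux of the argument as the claim $Z_T=X$ (equivalently $\dom(T)=X$ together with $0\in T(x)$ for all $x$). But from that point on the proposal is exploratory rather than a proof: none of the attempted enlargements ($T\cup\{(x_0,0)\}$, $T\cup(X\times\{0\})$, etc.) succeeds, and your final paragraph invokes Corollary~\ref{cor:trhozeros} without supplying the hypothesis it needs. This is a genuine gap.

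The paper closes it in two short steps that you are circling but never land on.
First, \emph{$Z_T=\dom(T)$}: for $x\in\dom(T)$ the set $T(x)=T^\rho(x)$ is a cone (because images of the pseudomonotone polar are conic) \emph{and} it is weak$^*$-closed (because a maximal monotone operator has weak$^*$-closed convex values). Thus item~4 of Corollary~\ref{cor:trhozeros} is satisfied at every $x\in\dom(T)$, and that corollary yields $x\in Z_{T^\rho}=Z_T$. The ``monotonicity plus suitable test points'' you allude to is precisely this: maximal monotonicity is used only to supply weak$^*$-closedness of the images.
Second, \emph{$\dom(T)=X$}: by Proposition~\ref{pro:zeros}, $\dom(T)=Z_T=Z_{T^\rho}$ is weak-closed and convex. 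If some $x\notin\dom(T)$, the Hahn--Banach separation theorem gives $x^*\in X^*$ with $\langle y-x,x^*\rangle<0$ for every $y\in\dom(T)$; hence $(x,x^*)\sim_p(y,y^*)$ for every $(y,y^*)\in T$, i.e.\ $(x,x^*)\in T^\rho=T$, contradicting $x\notin\dom(T)$.

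Once $X\times\{0\}\subset T$, the conclusion is immediate: taking polars reverses inclusion, so $T=T^\rho\subset(X\times\{0\})^\rho=X\times\{0\}$ by Example~\ref{exa:xtimes0} (or, as the paper puts it, $T(x)=T^\rho(x)=N_{W_T(x)}(x)=N_X(x)=\{0\}$ since now $0\in T(y)$ for all $y$, whence $W_T(x)=X$). Your write-up should replace the last two exploratory paragraphs with these two steps.
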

\begin{proof}
The ``if'' part follows from Example~\ref{exa:xtimes0} and Proposition~\ref{pro:maxpseu}. We now prove the ``only if'' part.
From the monotonicity and maximal pseudomonotonicity of $T$, we obtain the inclusions $T\subset T^\mu\subset T^\rho=T$, which in turn imply that $T$ is maximal monotone.  

We assert that $Z_T=\dom(T)$. Indeed, let $x\in\dom(T)$ and take $x^*\in T(x)$. Note that the maximal monotonicity and pseudomonotonicity of $T$ together imply that $T^{\rho}(x)=T(x)$ is a weak$^*$-closed conic set. Using Corollary~\ref{cor:trhozeros}, we conclude that $x\in Z_T$. In addition, Proposition~\ref{pro:zeros} implies that $\dom(T)$ is weak-closed and convex.
Assume that there exists $x\notin \dom(T)$.  Using the Hahn-Banach Theorem, there exists $x^*\in X^*$ such that $\inner{y-x}{x^*}<0$, for all $y\in\dom(T)$. This implies that $(x,x^*)\sim_p(y,y^*)$, for all $(y,y^*)\in T$, that is, $(x,x^*)\in T^{\rho}=T$, a contradiction. Therefore $\dom(T)=X$. The proposition follows by noting that $T(x)=N_X(x)=\{0\}$, for all $x\in X$.
\end{proof}

If $T$ is a pseudomonotone operator such that $T^{\rho}$ is pseudomonotone (therefore, maximal pseudomonotone), 
then $T$ will be called \emph{pre-maximal pseudomonotone}.  A pre-maximal pseudomonotone operator is not 
necesarily maximal, but possesses a unique maximal pseudomonotone extension.

\begin{lemma}\label{pre-max}
Let $T,S:X\tos X^*$ be two pseudomonotone operators such that $T\subset S$. If $T$ is pre-maximal pseudomonotone then $S$ is pre-maximal pseudomonotone and $T^{\rho}=S^{\rho}$
\end{lemma}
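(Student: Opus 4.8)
The plan is to exploit the polarity machinery already established, chiefly Proposition~\ref{pro:polarp}(2) and (4), together with the characterizations of pseudomonotonicity (Proposition~\ref{pro:4.1}) and maximal/pre-maximal pseudomonotonicity (Proposition~\ref{pro:maxpseu} and the definition just given). First I would record the chain of inclusions coming from $T\subset S$: applying the order-reversing property of the polar (item~4) twice gives $T^{\rho}\supset S^{\rho}$ and hence $T^{\rho\rho}\subset S^{\rho\rho}$. Since $T$ is pseudomonotone, $T\subset T^{\rho}$; since $S$ is pseudomonotone, $S\subset S^{\rho}\subset T^{\rho}$. So we have squeezed $T\subset S\subset T^{\rho}$, and applying the polar once more yields $T^{\rho}\supset S^{\rho}\supset T^{\rho\rho}$.

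Next I would use the hypothesis that $T$ is pre-maximal pseudomonotone, i.e.\ $T^{\rho}$ is pseudomonotone. By Proposition~\ref{pro:4.1} applied to the operator $T^{\rho}$, pseudomonotonicity of $T^{\rho}$ is equivalent to $T^{\rho\rho}\subset T^{\rho}$ (condition 3 of that proposition with $T$ replaced by $T^{\rho}$; note $(T^{\rho})^{\rho\rho}=T^{\rho\rho\rho}=T^{\rho}$ by Proposition~\ref{pro:polarp}(3), so the two conditions coincide cleanly). Combined with the always-true inclusion $T^{\rho}\subset T^{\rho\rho\rho}=T^{\rho}$... more to the point, I want to upgrade $T^{\rho\rho}\subset T^{\rho}$ to an equality. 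From Proposition~\ref{pro:polarp}(2), $T^{\rho}\subset T^{\rho\rho\rho}=T^{\rho}$ is trivial; instead note that $T^{\rho}\subset T^{\rho\rho}$ would require pseudomonotonicity-type reasoning. Actually the clean route: since $T^{\rho}$ is pseudomonotone, $T^{\rho}\subset (T^{\rho})^{\rho}=T^{\rho\rho}$, and we already have $T^{\rho\rho}\subset T^{\rho}$ from pre-maximality, so $T^{\rho}=T^{\rho\rho}$. By Proposition~\ref{pro:maxpseu}(3) this says $T^{\rho}$ is maximal pseudomonotone.

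Now I would close the argument. We have $T\subset S\subset T^{\rho}$ with $T^{\rho}$ maximal pseudomonotone, so taking polars: $T^{\rho}\supset S^{\rho}\supset T^{\rho\rho}=T^{\rho}$, forcing $S^{\rho}=T^{\rho}$. This is the second claimed identity. Finally, $S^{\rho}=T^{\rho}$ is pseudomonotone by the above, which is exactly the statement that $S$ is pre-maximal pseudomonotone.

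The only step that needs a little care — and the one I'd flag as the main obstacle — is the bookkeeping around $T^{\rho\rho}$: making sure the inclusion $T^{\rho}\subset T^{\rho\rho}$ genuinely follows from pseudomonotonicity of $T^{\rho}$ (via $T^{\rho}\subset(T^{\rho})^{\rho}$ and $(T^{\rho})^{\rho}=T^{\rho\rho}$, using $\rho\rho\rho=\rho$ only implicitly), rather than accidentally assuming what we want to prove. Everything else is a routine, if slightly intricate, application of the order-reversing property of the polar to the sandwich $T\subset S\subset T^{\rho}$.
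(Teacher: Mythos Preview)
Your argument is correct and follows essentially the same route as the paper: both proofs rest on the sandwich $T\subset S\subset S^{\rho}\subset T^{\rho}$ obtained from pseudomonotonicity of $S$ and the order-reversing property of the polar. The paper's version is terser---it simply observes that $S^{\rho}\subset T^{\rho}$ with $T^{\rho}$ pseudomonotone forces $S^{\rho}$ to be pseudomonotone (any subset of a pseudomonotone operator is pseudomonotone), hence $S$ is pre-maximal---but in fact never writes down the promised equality $T^{\rho}=S^{\rho}$. Your detour through $T^{\rho}=T^{\rho\rho}$ is slightly longer, yet it delivers that equality cleanly via $T^{\rho}=T^{\rho\rho}\subset S^{\rho}\subset T^{\rho}$, so in this respect your write-up is actually more complete than the paper's.
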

\begin{proof}
Clearly $T\subset S\subset S^\rho\subset T^\rho$. Since that $T^\rho$ is pseudomonotone its follow that $S^\rho$ is pseudomonotone.
Therefore, $S$ is pre-maximal pseudomonotone.
\end{proof}

For $T:X\tos X^*$ and $\alpha^*\in X^*$, consider the \emph{linear perturbation} $T+\alpha^*:X\tos X^*$ defined as
\[
(T+\alpha^*)(x)=T(x)+\alpha^*,\qquad \forall x\in X.
\] 
The relation between the quasimonotonicity of the linear perturbations $T+\alpha^*$ and the monotonicity of $T$ was originally established by Aussel, Corvellec and Lassonde~\cite{AusCorLass94},
and then extended to include maximality by Aussel and Eberhard~\cite[Proposition~6]{Aussel2013-4}. 
Recently, Bueno and Cotrina presented an analogous version of this result considering pre-maximality~\cite{BueCot16}.
As pre-maximal pseudomonotonicity does not imply pre-maximal quasimonotonicity (see Example~\ref{exa:xtimes0}),
we now present a version of the previous results which relates pre-maximal pseudomonotonicity and monotonicity.
\begin{proposition}
Let $T:X\tos X^*$ be a multivalued operator such that $T+\alpha^*:X\tos X^*$ is pre-maximal pseudomonotone, for all $\alpha^*\in X^*$. Then $T$ is pre-maximal monotone
and $T^\mu$ is pre-maximal pseudomonotone.

\end{proposition}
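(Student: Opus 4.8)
The statement asserts that if $T+\alpha^*$ is pre-maximal pseudomonotone for every $\alpha^*\in X^*$, then $T$ is pre-maximal monotone and $T^\mu$ is pre-maximal pseudomonotone. Pre-maximal monotonicity of $T$ means $T$ is monotone and $T^\mu$ is monotone. So the plan is to establish, in order: (i) $T$ is monotone; (ii) $T^\mu$ is monotone (giving pre-maximal monotonicity of $T$); and (iii) $T^\mu$ is pre-maximal pseudomonotone, i.e. $T^\mu$ is pseudomonotone and $(T^\mu)^\rho$ is pseudomonotone.

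\medskip

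\noindent\emph{Step (i): $T$ is monotone.} Fix $(x,x^*),(y,y^*)\in T$ and pick any $\alpha^*\in X^*$. Since $T+\alpha^*$ is in particular pseudomonotone, the pairs $(x,x^*+\alpha^*)$ and $(y,y^*+\alpha^*)$ are pseudomonotonically related; this is the classical device used by Aussel--Corvellec--Lassonde. Concretely, pseudomonotonicity of $T+\alpha^*$ gives that for all $\alpha^*$,
\[
\min\{\inner{y-x}{x^*+\alpha^*},\inner{x-y}{y^*+\alpha^*}\}<0
\quad\text{or both terms vanish.}
\]
Writing $a=\inner{y-x}{x^*}$, $b=\inner{x-y}{y^*}$ and $c=\inner{y-x}{\alpha^*}$, the condition reads $\min\{a+c,\,b-c\}<0$ or $a+c=b-c=0$ for every real value of $c$ (as $\alpha^*$ ranges over $X^*$, $c$ ranges over all of $\R$ unless $x=y$, in which case monotonicity is trivial). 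The only way $\min\{a+c,b-c\}\ge 0$ can be avoided for \emph{all} $c$ — except on the single value $c$ where equality is forced — is to have $a+b\le 0$; combined with the possibility of picking $c$ to make one term strictly negative, a short case analysis forces $a+b\le 0$, i.e. $\inner{x-y}{x^*-y^*}\le 0$... rather, $\inner{y-x}{x^*}+\inner{x-y}{y^*}\le 0$, which is exactly $\inner{x-y}{x^*-y^*}\ge 0$. Hence $T$ is monotone. (This is the main technical point; I expect the correct bookkeeping of the degenerate case where $a+c=b-c=0$ is forced to be the fiddly part.)

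\medskip

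\noindent\emph{Step (ii): $T^\mu$ is monotone, hence $T$ pre-maximal monotone.} Here I would use the corresponding pre-maximality hypothesis rather than mere pseudomonotonicity. Note $(T+\alpha^*)^\rho = T^\rho + \alpha^*$ — this identity for the pseudomonotone polar under linear perturbations should follow directly from the definition of $\sim_p$, since shifting both dual components by $\alpha^*$ leaves all the quantities $\inner{x-y}{y^*+\alpha^*}$ and $\inner{y-x}{x^*+\alpha^*}$ governed by the same arithmetic as above; alternatively it is the pseudomonotone analogue of the quasimonotone fact used in \cite{BueCot16}. Now $T^\mu+\alpha^*=(T+\alpha^*)^\mu\subset (T+\alpha^*)^\rho = T^\rho+\alpha^*$, and since $T+\alpha^*$ is pre-maximal pseudomonotone, $(T+\alpha^*)^\rho = T^\rho+\alpha^*$ is pseudomonotone for every $\alpha^*$. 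Applying Step (i)'s argument to the operator $S:=T^\rho$ (whose perturbations $S+\alpha^*$ are all pseudomonotone) yields that $T^\rho$ is monotone; since $T^\mu\subset T^\rho$ (equation~\eqref{eq:polarsubset}), $T^\mu$ is monotone. Together with Step (i), $T$ is pre-maximal monotone.

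\medskip

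\noindent\emph{Step (iii): $T^\mu$ is pre-maximal pseudomonotone.} Since $T^\mu$ is monotone (Step (ii)), it is pseudomonotone, so it remains to show $(T^\mu)^\rho$ is pseudomonotone. By Proposition~\ref{pro:polarp}(6)--(7) we have $(T^\mu)^\rho\subset T^{\mu\rho}$, and from $T^\mu\subset T^\rho$ and antitonicity (item 4) together with $T\subset T^{\rho\rho}$, one gets the chain $T^{\rho}\subset T^{\mu\rho}$ is not immediate, so instead I would argue: $T+\alpha^*$ pre-maximal pseudomonotone means $(T+\alpha^*)^\rho=T^\rho+\alpha^*$ is pseudomonotone for all $\alpha^*$, hence (Step (i) applied to $T^\rho$) $T^\rho$ is monotone, hence $T^{\rho}\subset (T^\rho)^\mu$ and in fact $T^\rho$ being monotone gives $(T^\rho)^\rho$ is... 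Here I would instead use the cleaner route: $T^\mu = (T^\mu)$ and since $T$ is pre-maximal monotone, $T^\mu$ is maximal monotone, so $T^\mu=T^{\mu\mu}$. Then $(T^\mu)^\rho\supset (T^\mu)^\mu = T^{\mu\mu}=T^\mu$, and $(T^\mu)^\rho\subset (T^\mu)^\nu$; but more usefully $(T^\mu)^\rho$ sits between $(T^\mu)^\mu=T^\mu$ and $(T^\mu)^{\mu\rho}$. Since every perturbation $(T^\mu+\alpha^*)=(T+\alpha^*)^\mu$ is monotone, it equals $(T+\alpha^*)^{\mu\mu}$, a maximal monotone operator, and maximal monotone operators are maximal pseudomonotone only in the trivial case — so I abandon that and finish as follows: $(T^\mu)^\rho\subset (T^\mu)^{\mu\rho}$ and $(T^\mu)^\mu=T^\mu\subset T^\rho$, so by antitonicity $(T^\rho)^\rho\subset (T^\mu)^\rho\subset T^{\mu\rho}$; from Step (ii), $T^\rho$ is monotone, hence pseudomonotone, hence by Proposition~\ref{pro:4.1}, $(T^\rho)^{\rho}$ is pseudomonotone; and $(T^\mu)^\rho$ is squeezed between $(T^\rho)^\rho$ — no.

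The clean finish: since $T^\rho$ is monotone (Step ii), $T^\rho$ is pseudomonotone, so $T^{\rho}\subset T^{\rho\rho}$; but also pre-maximal pseudomonotonicity of $T$ gives $T^\rho$ pseudomonotone directly, hence $T^\rho=T^{\rho\rho\rho}\supset$ its own polar is pseudomonotone by Proposition~\ref{pro:4.1}(3)--(4): \emph{$T^\rho$ pseudomonotone $\Rightarrow$ $(T^\rho)^{\rho}$ pseudomonotone}. Now $T^\mu\subset T^\rho$ implies $(T^\rho)^\rho\subset(T^\mu)^\rho$, and $(T^\mu)^\rho\subset T^{\mu\rho}$; I claim $(T^\mu)^\rho = (T^\rho)^\rho$. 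Indeed $T^{\rho}\subset (T^\rho)^{\rho\rho}$ and applying $\rho$, together with $T^\mu\subset T^\rho\subset (T^\mu)^{\rho\rho}$... this needs care. I would prove the two-sided inclusion $(T^\mu)^\rho=(T^\rho)^\rho$ using $T^\rho=T^{\rho\rho\rho}$ and $T^{\rho\mu}\subset T^{\rho\rho}$ (Proposition~\ref{pro:polarp}(7)) and then conclude $(T^\mu)^\rho$ is pseudomonotone.

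\medskip

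\noindent\textbf{Main obstacle.} The crux is Step (i) — extracting monotonicity of $T$ from pseudomonotonicity of all perturbations $T+\alpha^*$ — and then bootstrapping it to $T^\rho$; the degenerate "both terms zero" branch of $\sim_p$ must be handled so that the for-all-$\alpha^*$ quantifier genuinely forces $\inner{x-y}{x^*-y^*}\ge 0$. The rest of Step (iii), identifying $(T^\mu)^\rho$ with $(T^\rho)^\rho$ and invoking Proposition~\ref{pro:4.1}, is polarity bookkeeping via Proposition~\ref{pro:polarp}, and I expect it to go through routinely once the monotonicity of $T^\rho$ is in hand.
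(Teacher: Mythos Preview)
Your Step (i) is correct and matches the paper's approach (which simply cites \cite{AusCorLass94}).

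\medskip

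There is, however, a genuine error in Step (ii): the identity $(T+\alpha^*)^\rho = T^\rho + \alpha^*$ is \emph{false}. The pseudomonotone relation is not translation-invariant in the dual variable: the quantities $\inner{x-y}{y^*}$ and $\inner{y-x}{x^*}$ change individually under the shift $x^*\mapsto x^*+\alpha^*$, $y^*\mapsto y^*+\alpha^*$, unlike the monotone quantity $\inner{x-y}{x^*-y^*}$. A concrete counterexample: $T=\{(0,0)\}$ has $T^\rho=X\times X^*$, so $T^\rho+\alpha^*=X\times X^*$; but for $\alpha^*\neq 0$ one easily finds $(x,0)\notin\{(0,\alpha^*)\}^\rho$ whenever $\inner{x}{\alpha^*}>0$. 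Consequently your argument that $T^\rho+\alpha^*$ is pseudomonotone, and hence that $T^\rho$ is monotone, collapses.

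The fix is close to what you wrote: use the identity that \emph{does} hold, namely $(T+\alpha^*)^\mu=T^\mu+\alpha^*$. Then $T^\mu+\alpha^*=(T+\alpha^*)^\mu\subset(T+\alpha^*)^\rho$, and the latter is pseudomonotone by pre-maximality of $T+\alpha^*$; hence $T^\mu+\alpha^*$ is pseudomonotone for every $\alpha^*$, and Step (i) applied to $T^\mu$ gives that $T^\mu$ is monotone. This is essentially what the paper means by ``following the same proof of Proposition~4.12 in \cite{BueCot16}''.

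\medskip

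Step (iii) is where you lose your way entirely; the paper's argument is a two-line application of Lemma~\ref{pre-max} that you overlooked. Taking $\alpha^*=0$ in the hypothesis, $T$ itself is pre-maximal pseudomonotone. Since $T^\mu$ is monotone (Step (ii)) it is pseudomonotone, and $T\subset T^\mu$. Lemma~\ref{pre-max} then directly yields that $T^\mu$ is pre-maximal pseudomonotone (and $(T^\mu)^\rho=T^\rho$). None of the polarity gymnastics you attempt --- in particular the claimed equality $(T^\mu)^\rho=(T^\rho)^\rho$ --- is needed, and several of those chains are unjustified as written.
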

\begin{proof}
Monotonicity of $T$ is obtained from~\cite[Proposition 2.1]{AusCorLass94}. Following the same proof 
of Proposition 4.12 in \cite{BueCot16} we obtain that $T^{\mu}$ is monotone. 
%
Finally, since $T\subset T^\mu\subset T^\rho$, Lemma \ref{pre-max} and pre-maximality of $T$ imply the 
pre-maximal pseudomonotonicity of $T^\mu$. 
\end{proof}

\section{On $D$-maximal pseudomonotonicity}

We now address a weaker notion of maximality, defined by Hadjisavvas~\cite{Had03}, which he called \emph{$D$-maximal pseudomonotonicity}.  Explicitly, $T:X\tos X^*$ is $D$-maximal pseudomonotone, if $T$ is pseudomonotone and there exists an \emph{equivalent} pseudomonotone operator $S$, that is, $\dom(T)=\dom(S)$, $Z_T=Z_S$ and $\cone(T(x))=\cone(S(x))$, for all $x\in \dom(T)\setminus Z_T$, which has no proper pseudomonotone extension with the same domain.

As expected, a maximal pseudomonotone operator is also $D$-maximal pseudomonotone.

Also in~\cite{Had03}, the author considered the maximal equivalent operator (in the sense of inclusion) of a pseudomonotone operator. Let $T:X\tos X^*$ be a pseudomonotone operator. For $x\in Z_T$, consider
\[
L(T,x)=\{y\in X\::\:\exists y^*\in T(y),\,\inner{x-y}{y^*}\geq 0\},
\]
and the operator $\widehat{T}:X\tos X^*$, 
\[
 \widehat{T}(x)=
 \begin{cases}
 N_{L(T,x)}(x),&\text{if }x\in Z_T,\\
\cone_{\circ}(T(x)),&\text{if }x\in \dom(T)\setminus Z_T,\\
\emptyset,&\text{if }x\notin\dom(T).
\end{cases}                             
\]
\begin{proposition}[{\cite[Proposition 2.1]{Had03}}]\label{pro:had}
Let $T:X\tos X^*$ be pseudomonotone. Then
\begin{enumerate}
\item $\widehat{T}$ is pseudomonotone;
\item $T$ is equivalent to $\widehat{T}$;
\item $\widehat{T}$ is the greatest, in the sense of inclusion, between all pseudomonotone operators equivalent to $T$;
\item if $T$ and $S$ are equivalent, then $\widehat{T}=\widehat{S}$.
\end{enumerate}	
\end{proposition}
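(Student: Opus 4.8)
The argument splits along the four items, and the only genuinely delicate point is item~1; the rest is bookkeeping once the behaviour of the sets $L(T,x)$ is understood. I would begin by recording that $\dom(\widehat T)=\dom(T)$ and $Z_{\widehat T}=Z_T$. The domain equality is immediate: for $x\in Z_T$ one has $x\in L(T,x)$ (take $y=x$, $y^*=0\in T(x)$), hence $0\in N_{L(T,x)}(x)=\widehat T(x)$, while for $x\in\dom(T)\setminus Z_T$, $\widehat T(x)=\cones(T(x))\neq\emptyset$. For the zeros, on $Z_T$ we just saw $0\in\widehat T(x)$, and on $\dom(T)\setminus Z_T$ we have $0\notin\cones(T(x))$ because $0\notin T(x)$; so $Z_{\widehat T}=Z_T$. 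Item~2 then follows from the elementary identity $\cone(\cones(A))=\cone(A)$: for $x\in\dom(T)\setminus Z_T$, $\cone(\widehat T(x))=\cone(\cones(T(x)))=\cone(T(x))$. The key fact driving items~1 and~3 is that \emph{if $x\in Z_T$ and $(y,y^*)\in\widehat T$, then $\inner{x-y}{y^*}\le 0$}: if $y\in Z_T$ this holds because $x\in L(T,y)$ and $y^*\in N_{L(T,y)}(y)$; if $y\in\dom(T)\setminus Z_T$, write $y^*=sv^*$ with $s>0$, $v^*\in T(y)$, and apply pseudomonotonicity of $T$ to $(x,0),(y,v^*)\in T$: from $\inner{y-x}{0}=0\ge 0$ we get $\inner{y-x}{v^*}\ge 0$, i.e.\ $\inner{x-y}{y^*}=s\inner{x-y}{v^*}\le 0$.

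For item~1, take $(x,x^*),(y,y^*)\in\widehat T$ and verify $(x,x^*)\sim_p(y,y^*)$ by cases. If neither $x$ nor $y$ lies in $Z_T$, then both points lie in $\cones(T)$, which is pseudomonotone by Proposition~\ref{pro:4.1}(5), so we are done. Otherwise, by symmetry of $\sim_p$ we may assume $x\in Z_T$; by the key fact $\inner{x-y}{y^*}\le 0$. If this is strict, the minimum defining $\sim_p$ is negative and we are done; if $\inner{x-y}{y^*}=0$, I claim $\inner{y-x}{x^*}\le 0$ as well, and then either that minimum is negative or both quantities vanish, so $(x,x^*)\sim_p(y,y^*)$ in both subcases. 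For the claim: $x^*\in N_{L(T,x)}(x)$ since $x\in Z_T$, so it suffices to show $y\in L(T,x)$; this is clear if $y\in Z_T$, and if $y\notin Z_T$ we write $y^*=sv^*$ ($s>0$, $v^*\in T(y)$) and note $\inner{x-y}{v^*}=0\ge 0$, so $y\in L(T,x)$.

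For item~3 the crucial lemma is that equivalent pseudomonotone operators share the sets $L(\cdot,x)$ on their common zero set: \emph{if $T$ and $S$ are equivalent and $x\in Z_T=Z_S$, then $L(T,x)=L(S,x)$}. For $y\in Z_T$ both sides contain $y$; for $y\in\dom(T)\setminus Z_T$, the key fact gives $\inner{x-y}{y^*}\le 0$ for all $y^*\in T(y)$, so $y\in L(T,x)$ iff some $y^*\in T(y)$ is orthogonal to $x-y$, and since $0\notin T(y)$ such a $y^*$ is a strictly positive multiple of some $z^*\in S(y)$ (using $\cone(T(y))=\cone(S(y))$), which is then also orthogonal to $x-y$, so $y\in L(S,x)$; the reverse inclusion is symmetric. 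Granting this, for any pseudomonotone $S$ equivalent to $T$ I would show $S\subset\widehat T$ pointwise: off $\dom(T)$ it is trivial; for $x\in\dom(T)\setminus Z_T$, $S(x)\subset\cone(S(x))=\cone(T(x))$ and $0\notin S(x)$, whence $S(x)\subset\cones(T(x))=\widehat T(x)$; and for $x\in Z_T$, an argument parallel to item~1 (if $x^*\in S(x)$ and $y\in L(S,x)$, pick $y^*\in S(y)$ with $\inner{x-y}{y^*}\ge 0$; pseudomonotonicity of $S$ forces $\inner{y-x}{x^*}\le 0$) gives $S(x)\subset N_{L(S,x)}(x)=N_{L(T,x)}(x)=\widehat T(x)$. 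Together with items~1 and~2, this shows $\widehat T$ is the inclusion-greatest pseudomonotone operator equivalent to $T$.

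Item~4 is then formal: equivalence of operators is reflexive, symmetric and transitive, so $T$ and $S$ determine the same equivalence class, and by item~3 both $\widehat T$ and $\widehat S$ are the inclusion-greatest pseudomonotone member of that class, forcing $\widehat T=\widehat S$; alternatively one argues directly from the lemma $L(T,x)=L(S,x)$ together with $\cones(A)=\cone(A)\setminus\{0\}$ when $0\notin A$. I expect the three-case verification in item~1 — specifically the mixed case in which exactly one of the two points is a zero of $T$, where one must combine the normal-cone inequality with pseudomonotonicity of $T$ through the point $(x,0)$ — to be the only real obstacle; once the behaviour of $L(T,x)$ is pinned down, items~2, 3 and~4 are routine.
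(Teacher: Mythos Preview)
The paper does not supply a proof of this proposition: it is quoted verbatim from Hadjisavvas~\cite{Had03} and left unproved there. So there is no ``paper's own proof'' to compare against.

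Your argument is correct and self-contained. The key fact you isolate --- that $x\in Z_T$ and $(y,y^*)\in\widehat T$ force $\inner{x-y}{y^*}\le 0$ --- is exactly what drives the mixed case of item~1, and your case split (both points outside $Z_T$ handled by Proposition~\ref{pro:4.1}(5), otherwise reduce to the key fact plus membership in $L(T,x)$) is clean. The lemma $L(T,x)=L(S,x)$ for equivalent $T,S$ and $x\in Z_T$ is the right engine for items~3 and~4, and your verification of it is sound: for $y\in\dom(T)\setminus Z_T$ you correctly observe that pseudomonotonicity with the pair $(x,0)\in T$ forces $\inner{x-y}{y^*}\le 0$ for \emph{every} $y^*\in T(y)$, so membership in $L(T,x)$ reduces to the existence of an orthogonal element, which transfers through $\cone(T(y))=\cone(S(y))$. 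One small point worth making explicit is that $L(T,x)\subset\dom(T)=\dom(S)\supset L(S,x)$, so points outside the common domain lie in neither set; you handle the two cases inside $\dom(T)$ but leave this tacit.
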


Note that $\dom(T)=\dom(\widehat{T})$. In addition, since $Z_T\subset Z_{T^{\rho}}$, 
when $x\in Z_T$, $L(T,x)=W_T(x)$ and $\widehat{T}(x)=T^{\rho}(x)$. This, along with Proposition~\ref{pro:had}, Proposition \ref{pro:polarp}, item {\it 4}, and Proposition \ref{pro:4.1}, item {\it 2}, implies the following inclusions
\begin{equation}\label{eq:1}
T\subset\widehat{T}\subset(\widehat{T})^\rho\subset T^{\rho}.
\end{equation}

The fact that the domain of $\widehat{T}$ is the same as $T$ motivates us to study the pseudomonotone polar of $T$ restricted to $T$'s domain, which we will denote as $T^{\rho}_D=T^{\rho}|_{\dom(T)}$. With this notation, we can rewrite~\eqref{eq:1} and obtain
\begin{equation}\label{eq:2}
T\subset\widehat{T}\subset(\widehat{T})^\rho_D\subset T^{\rho}_D.
\end{equation}
These four operators have the same domain, $\dom(T)$.

We now give a characterization of $D$-maximality in terms of the polar of the operator $\widehat{T}$.
\begin{theorem}\label{teo:dmax}
Let $T:X\tos X^*$ be a pseudomonotone operator. Then $T$ is $D$-maximal pseudomonotone if, and only if, $\widehat{T}=(\widehat{T})^{\rho}_D$.
\end{theorem}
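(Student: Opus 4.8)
The plan is to prove each implication separately, using the chain of inclusions \eqref{eq:2}, namely $T\subset\widehat{T}\subset(\widehat{T})^\rho_D\subset T^\rho_D$, together with Proposition~\ref{pro:had} (which tells us $\widehat{T}$ is pseudomonotone, equivalent to $T$, and the greatest among operators equivalent to $T$) and Corollary~\ref{123} (which identifies $T^\rho$ with the set of pairs whose adjunction keeps pseudomonotonicity).

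For the ``only if'' direction, suppose $T$ is $D$-maximal pseudomonotone. Then $\widehat{T}$ is pseudomonotone, equivalent to $T$, and by definition of $D$-maximality it has no proper pseudomonotone extension with the same domain. I would argue that $(\widehat{T})^\rho_D$ is such an extension: it has domain $\dom(T)$ by \eqref{eq:2}, it contains $\widehat{T}$, and — crucially — it is pseudomonotone. The pseudomonotonicity of $(\widehat{T})^\rho_D$ should follow because $\widehat{T}$ is pre-maximal pseudomonotone: indeed $(\widehat{T})^\rho=T^\rho$ up to the zero-set adjustment, but more directly, since $\widehat{T}=\widehat{\widehat{T}}$ (apply Proposition~\ref{pro:had}\,(4) to $\widehat{T}$, which is equivalent to itself) one gets from \eqref{eq:2} applied to $\widehat{T}$ that $\widehat{T}\subset(\widehat{T})^\rho_D\subset(\widehat{T})^\rho_D$, and I need the polar $(\widehat{T})^\rho$ to be pseudomonotone. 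The cleanest route is: by $D$-maximality, $\widehat{T}$ admits no proper same-domain pseudomonotone extension; combined with Zorn's lemma this forces $\widehat{T}$ to equal the union of its same-domain maximal pseudomonotone extensions, and via an argument paralleling Proposition~\ref{pro:maxpseu} this union is exactly $(\widehat{T})^\rho_D$. Hence $\widehat{T}=(\widehat{T})^\rho_D$.

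For the ``if'' direction, suppose $\widehat{T}=(\widehat{T})^\rho_D$. I must show $T$ is $D$-maximal pseudomonotone, i.e.\ that the equivalent operator $\widehat{T}$ has no proper pseudomonotone extension with domain $\dom(T)$. So let $S$ be pseudomonotone with $\widehat{T}\subset S$ and $\dom(S)=\dom(\widehat{T})=\dom(T)$. Pick $(x,x^*)\in S$; then $x\in\dom(T)$ and, by pseudomonotonicity of $S$ together with $\widehat{T}\subset S$, the pair $(x,x^*)$ is in a pseudomonotone relation with every element of $\widehat{T}$, so $(x,x^*)\in(\widehat{T})^\rho$. Since also $x\in\dom(T)$, we get $(x,x^*)\in(\widehat{T})^\rho_D=\widehat{T}$. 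Thus $S\subset\widehat{T}$, so $S=\widehat{T}$, and $\widehat{T}$ has no proper same-domain pseudomonotone extension. Since $T$ is equivalent to $\widehat{T}$ and pseudomonotone, this is precisely $D$-maximal pseudomonotonicity of $T$.

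I expect the main obstacle to be the ``only if'' direction, specifically establishing that $(\widehat{T})^\rho_D$ is itself pseudomonotone (equivalently, that $\widehat{T}$ is pre-maximal pseudomonotone). The subtlety is the restriction to $\dom(T)$: the full polar $T^\rho$ need not be pseudomonotone even when $T$ is $D$-maximal, so one cannot simply invoke Proposition~\ref{pro:4.1}. I would handle this by showing that any two elements of $(\widehat{T})^\rho_D$ are pseudomonotonically related using the midpoint argument of Proposition~\ref{pseudo-dom} (since their midpoint lies in $\co(\dom(T))\subset\dom((\widehat{T})^\rho_D)$, using that $\dom((\widehat{T})^\rho_D)=\dom(T)$ together with convexity of $\dom(T)$ — which itself needs justification, perhaps from $Z_T$ being convex via Proposition~\ref{pro:zeros} when $T$ is $D$-maximal), or alternatively by directly showing $\widehat{T}$ admits a same-domain maximal pseudomonotone extension whose polar-restriction is forced to equal $\widehat{T}$ by the $D$-maximality hypothesis.
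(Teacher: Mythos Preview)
Your ``if'' direction is correct and coincides with the paper's argument.

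For the ``only if'' direction, the paper's proof is much shorter than what you attempt: it never claims $(\widehat{T})^\rho_D$ is pseudomonotone. Instead it argues element by element. Given $(x,x^*)\in(\widehat{T})^\rho_D$, Corollary~\ref{123} gives that $\widehat{T}\cup\{(x,x^*)\}$ is pseudomonotone; since $x\in\dom(T)$ this is a same-domain pseudomonotone extension of $\widehat{T}$, so $D$-maximality forces $(x,x^*)\in\widehat{T}$. That is the whole argument. Your ``cleanest route'' (identifying $(\widehat{T})^\rho_D$ with the union of same-domain maximal pseudomonotone extensions of $\widehat{T}$, which collapses to $\widehat{T}$) is in fact a valid reformulation of exactly this idea --- the inclusion of $(\widehat{T})^\rho_D$ into that union uses Corollary~\ref{123} in precisely the way the paper does --- but you do not seem to recognize that it already finishes the proof and makes the pseudomonotonicity of $(\widehat{T})^\rho_D$ irrelevant.

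The concerns you raise in your final paragraph are therefore red herrings, and the proposed fixes would not work anyway. Convexity of $\dom(T)$ is not part of the hypotheses and genuinely fails: the paper's own example with $T=\{(0,-1),(1,0)\}$ is $D$-maximal pseudomonotone with $\dom(T)=\{0,1\}$, so no midpoint argument in the style of Proposition~\ref{pseudo-dom} is available. Likewise, pre-maximality of $\widehat{T}$ is not known a priori; it is a consequence of the theorem (since $\widehat{T}=(\widehat{T})^\rho_D$ trivially makes $(\widehat{T})^\rho_D$ pseudomonotone), not a tool for proving it. Drop the attempt to establish pseudomonotonicity of the restricted polar and simply run the one-element extension argument via Corollary~\ref{123}.
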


\begin{proof}
Assume that $T$ is $D$-maximal pseudomonotone.
From Proposition~\ref{pro:had}, item~{\it 1}, we have $\widehat{T}\subset(\widehat{T})^{\rho}_D$. 
On the other hand, given $(x,x^*)\in(\widehat{T})^{\rho}_D$, by Corollary~\ref{123}, $\widehat{T}\cup\{(x,x^*)\}$ is pseudomonotone. Since $T$ is $D$-maximal pseudomonotone, $(x,x^*)\in\widehat{T}$. This proves the ``only if'' part. 

For the ``if'' part, note that, by Proposition~\ref{pro:had}, item {\it 3}, it is enough to prove that $\widehat T$ has no proper pseudomonotone extension with the same domain. Otherwise, there would exist $(x,x^*)\in X\times X^*$, with $x\in\dom(T)$, such that $S'=\widehat T\cup\{(x,x^*)\}$ is pseudomonotone. This implies that $(x,x^*)\in(\widehat{T})^{\rho}_D$. Therefore, $(x,x^*)\in\widehat T$ and $T$ is $D$-maximal pseudomonotone. 
\end{proof}

\begin{remark}
	It is not difficult to show that 
	if $\dom(T)=X$ then $T$ is $D$-maximal pseudomonotone if, and only if, $\widehat{T}$ is maximal pseudomonotone. However,
	in the case general is false, consider us the multivalued operator of Example \ref{ejem1}, the operator $\widehat{T}$ is defined by
	\[
	\widehat{T}(x)=\left\lbrace\begin{array}{cc}
	]-\infty,0[&,x<0\\
	]0,+\infty[&,x>0
	\end{array}
	\right.
	\]
	Clearly $T$ is $D$-maximal pseudomonotone but $\widehat{T}$ isnt maximal pseudomonotone.
\end{remark}
Define $T^{\rho\rho}_D=T^{\rho\rho}|_{\dom(T)}$. As a direct consequence of Theorem \ref{teo:dmax} we have the following corollary.
\begin{corollary}
Let $T:X\tos X^*$ be a multivalued operator. If $T$ is $D$-maximal pseudomonotone then $T^{\rho\rho}_D\subset \widehat{T}=(\widehat{T})_D^{\rho\rho}$.  
\end{corollary}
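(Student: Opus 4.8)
The plan is to derive everything from Theorem~\ref{teo:dmax} together with the abstract polarity identities of Proposition~\ref{pro:polarp}, being careful with the bookkeeping of the restriction to $\dom(T)$. Throughout I will use that $\dom(\widehat T)=\dom(T)$, so that restricting to $\dom(T)$ any operator sitting between $\widehat T$ and $T^\rho$ never disturbs $\widehat T$ itself, exactly as in the chain $T\subset\widehat T\subset(\widehat T)^\rho_D\subset T^\rho_D$ from \eqref{eq:2}.

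The first step is to establish the equality $\widehat T=(\widehat T)^{\rho\rho}_D$. The inclusion $\widehat T\subset(\widehat T)^{\rho\rho}_D$ is immediate: Proposition~\ref{pro:polarp}, item~2, gives $\widehat T\subset(\widehat T)^{\rho\rho}$, and restricting to $\dom(T)=\dom(\widehat T)$ leaves the left-hand side unchanged. For the reverse inclusion, recall that $\widehat T$ is pseudomonotone by Proposition~\ref{pro:had}, item~1, so Proposition~\ref{pro:4.1}, item~2, yields $\widehat T\subset(\widehat T)^{\rho}$; applying the (order-reversing) polar gives $(\widehat T)^{\rho\rho}\subset(\widehat T)^{\rho}$. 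Restricting to $\dom(T)$ and using Theorem~\ref{teo:dmax}, which under the $D$-maximality hypothesis gives $(\widehat T)^{\rho}_D=\widehat T$, we obtain $(\widehat T)^{\rho\rho}_D\subset(\widehat T)^{\rho}_D=\widehat T$. Hence $\widehat T=(\widehat T)^{\rho\rho}_D$.

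The second step is $T^{\rho\rho}_D\subset\widehat T$. From \eqref{eq:1} we have $T\subset\widehat T$. Since $S\mapsto S^\rho$ is a polarity, it reverses inclusions, so $S\mapsto S^{\rho\rho}$ preserves them; hence $T^{\rho\rho}\subset(\widehat T)^{\rho\rho}$. Restricting to $\dom(T)$ gives $T^{\rho\rho}_D\subset(\widehat T)^{\rho\rho}_D$, and combining with the equality from the first step, $T^{\rho\rho}_D\subset(\widehat T)^{\rho\rho}_D=\widehat T$. Chaining the conclusions of the two steps gives $T^{\rho\rho}_D\subset\widehat T=(\widehat T)^{\rho\rho}_D$, which is the corollary.

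I do not expect any genuine obstacle here: the substantive content is Theorem~\ref{teo:dmax}, and the remainder is just the order-reversing/order-preserving behavior of the polar together with the fact that all of $T$, $\widehat T$, $(\widehat T)^\rho_D$, $T^\rho_D$ share domain $\dom(T)$, so that restriction never shrinks the smaller operators. The only point worth double-checking is that the polar and the restriction to $\dom(T)$ interact as intended, i.e.\ that $(\widehat T)^{\rho\rho}_D$ means $(\widehat T)^{\rho\rho}|_{\dom(T)}$, which is consistent with the definition $T^{\rho\rho}_D=T^{\rho\rho}|_{\dom(T)}$ stated just before the corollary.
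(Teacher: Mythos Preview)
Your proof is correct and follows exactly the route the paper intends: the corollary is stated as ``a direct consequence of Theorem~\ref{teo:dmax}'', and you unpack this using only Theorem~\ref{teo:dmax}, the polarity identities of Proposition~\ref{pro:polarp}, and the pseudomonotonicity of $\widehat T$ from Proposition~\ref{pro:had}. Your careful handling of the restriction to $\dom(T)$ (in particular that $\dom(\widehat T)=\dom(T)$, so the $D$-subscript has the same meaning on both sides) is appropriate and matches the paper's conventions.
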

From Theorem \ref{teo:dmax} and inclusion \eqref{eq:2} we have the following corollary.
\begin{corollary}
Let $T:X\tos X^*$ be a $D$-maximal pseudomonotone operator. If $\dom(T)=\dom(T^\rho)$ then 
$\widehat{T}$ is maximal pseudomonotone.
\end{corollary}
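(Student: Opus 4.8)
The plan is to reduce the statement to the maximality criterion of Proposition~\ref{pro:maxpseu}(3): since $\widehat T$ is pseudomonotone by Proposition~\ref{pro:had}(1), it will suffice to prove that $\widehat T$ coincides with its (full, unrestricted) pseudomonotone polar, i.e.\ $\widehat T=(\widehat T)^{\rho}$. The subtlety is that Theorem~\ref{teo:dmax} only gives us information about the \emph{restricted} polar $(\widehat T)^{\rho}_D$, so the whole point of the extra hypothesis $\dom(T)=\dom(T^{\rho})$ is to collapse the distinction between $(\widehat T)^{\rho}_D$ and $(\widehat T)^{\rho}$.

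First I would chase domains through inclusion~\eqref{eq:1}, namely $T\subset\widehat T\subset(\widehat T)^{\rho}\subset T^{\rho}$. Passing to domains gives $\dom(T)\subseteq\dom(\widehat T)\subseteq\dom((\widehat T)^{\rho})\subseteq\dom(T^{\rho})$. Since $\dom(\widehat T)=\dom(T)$ and, by hypothesis, $\dom(T^{\rho})=\dom(T)$, all four domains coincide; in particular $\dom((\widehat T)^{\rho})=\dom(T)$. Consequently, restricting $(\widehat T)^{\rho}$ to $\dom(T)$ does nothing: $(\widehat T)^{\rho}_D=(\widehat T)^{\rho}|_{\dom(T)}=(\widehat T)^{\rho}$.

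Now I would invoke Theorem~\ref{teo:dmax}: because $T$ is $D$-maximal pseudomonotone, $\widehat T=(\widehat T)^{\rho}_D$, and by the previous step this equals $(\widehat T)^{\rho}$. Thus $\widehat T$ is pseudomonotone (Proposition~\ref{pro:had}(1)) and satisfies $\widehat T=(\widehat T)^{\rho}$, so Proposition~\ref{pro:maxpseu}(3) yields that $\widehat T$ is maximal pseudomonotone, as required. I do not expect a substantive obstacle here: the argument is a short assembly of results already established, the only place any care is needed being the domain chase in the first step, where the hypothesis $\dom(T)=\dom(T^{\rho})$ is used precisely to identify the restricted polar of $\widehat T$ with its genuine polar.
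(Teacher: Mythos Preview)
Your argument is correct and is essentially the same as the paper's, which simply notes that the corollary follows from Theorem~\ref{teo:dmax} together with the chain of inclusions~\eqref{eq:1}/\eqref{eq:2}. Your domain chase via~\eqref{eq:1} is exactly what is needed to upgrade $(\widehat T)^{\rho}_D$ to $(\widehat T)^{\rho}$ under the hypothesis $\dom(T)=\dom(T^{\rho})$, after which Theorem~\ref{teo:dmax} and Proposition~\ref{pro:maxpseu}(3) finish the job.
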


\begin{proposition}\label{pro:domconv}
	Let $T:X\tos X^*$ be a pseudomonotone operator. If $\dom(T)$ is convex then $(\widehat{T})^\rho_D=T^{\rho}_D$.
\end{proposition}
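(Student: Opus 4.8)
The plan is to show the nontrivial inclusion $T^\rho_D \subset (\widehat T)^\rho_D$, since $(\widehat T)^\rho_D \subset T^\rho_D$ already holds by inclusion~\eqref{eq:2}. Since both operators have domain $\dom(T)$, it suffices to fix $x \in \dom(T)$ and $x^* \in T^\rho(x)$ and prove that $(x,x^*) \sim_p (y,y^*)$ for every $(y,y^*) \in \widehat T$. By Corollary~\ref{123}, equivalently, I would show that $\widehat T \cup \{(x,x^*)\}$ is pseudomonotone, using that $\widehat T$ itself is pseudomonotone by Proposition~\ref{pro:had}, item~{\it 1}.

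So fix $(y,y^*) \in \widehat T$; I must check $(x,x^*) \sim_p (y,y^*)$. There are two cases according to the definition of $\widehat T$. If $y \in \dom(T) \setminus Z_T$, then $y^* \in \cone_\circ(T(y))$, so $y^* = t w^*$ for some $t > 0$ and $w^* \in T(y)$. Since $(x,x^*) \in T^\rho$ we have $(x,x^*) \sim_p (y,w^*)$, and then Lemma~\ref{3.6} (applied with $s = t$, scaling only the second coordinate's dual part — or more directly, since the pseudomonotone relation is invariant under multiplying $y^*$ by a positive scalar, as one sees immediately from its definition) gives $(x,x^*) \sim_p (y, y^*)$. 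The remaining case is $y \in Z_T$, where $\widehat T(y) = N_{L(T,y)}(y)$ and, as noted in the excerpt, $L(T,y) = W_T(y)$ when $y \in Z_T$; so $y^* \in N_{W_T(y)}(y)$, i.e.\ $\inner{z-y}{y^*} \le 0$ for all $z \in W_T(y)$.

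The main obstacle is this second case: I need to extract from $x^* \in T^\rho(x)$ and the convexity of $\dom(T)$ the inequality that makes $(x,x^*) \sim_p (y,y^*)$. The natural move is to use $x \in \dom(T)$: since $y \in Z_T$, i.e.\ $0 \in T(y)$, we have $\inner{x-y}{0} = 0$, so $x \in W_T(y)$, and hence $\inner{x-y}{y^*} \le 0$. If this inequality is strict, $(x,x^*) \sim_p (y,y^*)$ is immediate. If $\inner{x-y}{y^*} = 0$, I need $\inner{y-x}{x^*} \le 0$ as well — and here is where $x^* \in T^\rho(x)$ must be brought in via Proposition~\ref{pro:trhogen}, which gives $x^* \in N^\circ_{V_T(x)}(x) \cap N_{W_T(x)}(x)$. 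The point is to show $y \in W_T(x) \cup V_T(x)$, i.e.\ that there is $\bar y^* \in T(y)$ with $\inner{x-y}{\bar y^*} \ge 0$; taking $\bar y^* = 0 \in T(y)$ gives $\inner{x-y}{0} = 0 \ge 0$, so $y \in W_T(x)$, whence $x^* \in N_{W_T(x)}(x)$ yields $\inner{y-x}{x^*} \le 0$, as needed. I would double-check whether convexity of $\dom(T)$ is genuinely used, or whether it is only needed to guarantee that the four operators in~\eqref{eq:2} share the domain $\dom(T)$ in the way the statement is phrased; if the argument above goes through verbatim, convexity enters only to make the comparison on $\dom(T)$ meaningful, and I would state that explicitly rather than invoke it spuriously. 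Finally, assembling the two cases gives $(x,x^*) \sim_p (y,y^*)$ for all $(y,y^*) \in \widehat T$, hence $x^* \in (\widehat T)^\rho(x)$, and since $x \in \dom(T)$ was arbitrary, $T^\rho_D \subset (\widehat T)^\rho_D$, completing the proof.
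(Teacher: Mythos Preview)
Your case analysis differs from the paper's midpoint argument via Proposition~\ref{pro:pseudo-new}, and Case~1 ($y\in\dom(T)\setminus Z_T$) is handled correctly. The gap is in Case~2. Your justification that $x\in W_T(y)$ from ``$0\in T(y)$ and $\inner{x-y}{0}=0$'' is a role-swap: by definition $W_T(y)=\{z:\exists z^*\in T(z),\ \inner{y-z}{z^*}=0\}$, so this argument shows $y\in W_T(x)$, not $x\in W_T(y)$. You do use $y\in W_T(x)$ correctly later to get $\inner{y-x}{x^*}\le 0$, but the step $\inner{x-y}{y^*}\le 0$ remains unjustified. When $\inner{y-x}{x^*}=0$ you still need $\inner{x-y}{y^*}\le 0$, and nothing in your argument supplies it.

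This is not repairable without convexity: the conclusion itself can fail when $\dom(T)$ is not convex. Take $X=\R$, $T=\{(0,0),(2,1)\}$ (pseudomonotone, $\dom(T)=\{0,2\}$). Then $Z_T=\{0\}$, $L(T,0)=W_T(0)=\{0\}$, so $\widehat T(0)=N_{\{0\}}(0)=\R$. One checks $T^\rho(2)=[0,\infty)$, so $(2,0)\in T^\rho_D$; but $(0,1)\in\widehat T$ and $\min\{\inner{2}{1},\inner{-2}{0}\}=0$ with $\inner{2}{1}\neq 0$, hence $(2,0)\not\sim_p(0,1)$ and $(2,0)\notin(\widehat T)^\rho_D$. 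Thus $T^\rho_D\neq(\widehat T)^\rho_D$. The paper's proof uses convexity exactly here: for $(x,x^*)\in T^\rho$ and $(y,y^*)\in\widehat T\subset T^\rho$, it picks $z=(x+y)/2\in\dom(T)$, any $z^*\in T(z)$, and applies Proposition~\ref{pro:pseudo-new} to the chain $(x,x^*)\sim_p(z,z^*)\sim_p(y,y^*)$. Your suspicion that convexity might be superfluous is therefore incorrect, and your argument cannot be completed as written.
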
	
\begin{proof}
From~\eqref{eq:2}, we already have the inclusion $(\widehat{T})^\rho_D\subset T^{\rho}_D$. 
Let $(x,x^*)\in T^{\rho}_D$, then $x\in \dom(T)$ and $(x,x^*)\in T^{\rho}$. 
We now prove that $(x,x^*)\in(\widehat{T})^{\rho}$. 
Indeed, take $(y,y^*)\in\widehat{T}$, then $(y,y^*)\in T^{\rho}$ and $y\in \dom(T)$. 
Since $\dom(T)$ is convex, $z=\dfrac{x+y}{2}\in\dom(T)$ so there exists $z^*\in T(z)$. 
Thus, $(z,z^*)$ is pseudomonotonically related to $(x,x^*)$ and $(y,y^*)$ and, 
by Proposition~\ref{pro:pseudo-new}, $(x,x^*)$ and $(y,y^*)$ are pseudomonotonically related. 
Therefore $(x,x^*)\in (\widehat{T})^{\rho}$, so $(\widehat{T})^\rho_D=T^\rho_D$, and the proposition follows.
\end{proof}
From Theorem~\ref{teo:dmax} and  Proposition~\ref{pro:domconv}, we recover Lemma 3.2 in~\cite{Had03}.
\begin{corollary}\label{H-lemma}
Let $T:X\tos X^*$ be a pseudomonotone operator. If $\widehat{T}=T^{\rho}_D$ then $T$ is $D$-maximal pseudomonotone. In addition, if $\dom(T)$ is convex then the converse is true.
\end{corollary}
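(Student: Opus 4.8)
The plan is to deduce both implications directly from Theorem~\ref{teo:dmax}, the chain of inclusions~\eqref{eq:2}, and Proposition~\ref{pro:domconv}; no new estimate is required, so the ``proof'' is really a bookkeeping of already-established facts.

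First I would treat the implication that needs no convexity hypothesis. Assuming $\widehat{T}=T^{\rho}_D$, I recall from~\eqref{eq:2} the chain
\[
T\subset\widehat{T}\subset(\widehat{T})^\rho_D\subset T^{\rho}_D,
\]
whose first and last displayed terms $\widehat{T}$ and $T^{\rho}_D$ coincide by hypothesis. Hence the squeezed inclusions $\widehat{T}\subset(\widehat{T})^\rho_D\subset T^{\rho}_D$ collapse to equalities, in particular $(\widehat{T})^\rho_D=\widehat{T}$. Theorem~\ref{teo:dmax} then gives at once that $T$ is $D$-maximal pseudomonotone.

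For the converse I would add the standing assumption that $\dom(T)$ is convex and suppose $T$ is $D$-maximal pseudomonotone. Theorem~\ref{teo:dmax} supplies $\widehat{T}=(\widehat{T})^\rho_D$, while Proposition~\ref{pro:domconv}, using convexity of $\dom(T)$, supplies $(\widehat{T})^\rho_D=T^{\rho}_D$; composing these two equalities yields $\widehat{T}=T^{\rho}_D$, which is the desired conclusion.

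Since every step is a one-line appeal to a previously proved result, I do not expect a genuine obstacle here. The only subtlety worth stating explicitly is that all four operators appearing in~\eqref{eq:2} have the common domain $\dom(T)$, so the inclusions there are honest inclusions of graphs over the same domain and therefore become equalities as soon as the endpoints agree; this is precisely why restricting to $\dom(T)$ in the definition of $T^{\rho}_D$ (and $(\widehat T)^\rho_D$) lets Theorem~\ref{teo:dmax} and Proposition~\ref{pro:domconv} dovetail without any extra argument.
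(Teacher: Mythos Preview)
Your proof is correct and follows essentially the same approach as the paper's own proof: both directions are obtained by combining the chain~\eqref{eq:2}, Theorem~\ref{teo:dmax}, and (for the converse under convexity) Proposition~\ref{pro:domconv}, in exactly the order you describe.
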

\begin{proof}
From the inclusions in~\eqref{eq:2}, if $\widehat{T}=T^{\rho}_D$ then $\widehat{T}=(\widehat{T})^{\rho}_D$ and, by Theorem~\ref{teo:dmax}, $T$ is $D$-maximal pseudomonotone.  When $\dom(T)$ is convex, by Proposition~\ref{pro:domconv}, $T^{\rho}_D=(\widehat{T})^\rho_D$, which coincides with $\widehat{T}$ again from Theorem~\ref{teo:dmax}. 
\end{proof}

It is a direct consequence of Zorn's Lemma that every pseudomonotone operator has a $D$-maximal pseudomonotone extension with the same domain.  Moreover, it is straightforward to prove that $T^{\rho}_D$ is the union of all $D$-maximal pseudomonotone operators containing $T$.

\begin{example}[{\cite{Had03}}]
Consider $T:\R\tos\R$, $T=\{(0,-1),(1,0)\}$, which is a pseudomonotone operator. 
In this case $\widehat{T}=(\{0\}\times \R_{++})\cup(\{1\}\times\R)$. 
Note that $\widehat{T}$ does not have a pseudomonotone extension with the same 
domain, so $T$ is $D$-maximal pseudomonotone.  However, $T^{\rho}_D=(\{0\}\times \R_{+})\cup(\{1\}\times\R)$ 
is not pseudomonotone.  Now consider $S=(\{0\}\times\R_{-})\cup(\{1\}\times\R_+)$, and observe 
that $T\subset S$ and $S$ also does not have a pseudomonotone extension with the same domain. 
Thus $S$ is $D$-maximal pseudomonotone aswell. Finally, observe that $T^{\rho}_D=\widehat{T}\cup S$.
\end{example}

\begin{remark}
If $T$ is pseudomonotone and $\widehat{T}=T^\rho$ then $T$ is pre-maximal pseudomonotone and $D$-maximal pseudomonotone. 
\end{remark}
The condition of pre-maximal pseudomonotonicity can be used to replace convexity of the domain in Corollary~\ref{H-lemma}.
\begin{corollary}
Let $T:X\tos X^*$ be a pre-maximal pseudomonotone operator. Then $T$ is $D$-maximal pseudomonotone if, and only if, $\widehat{T}=T^\rho_D$.
\end{corollary}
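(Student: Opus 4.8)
The plan is to prove the two implications separately: the ``if'' direction is immediate from an earlier result, and the ``only if'' direction is the substantive part where pre-maximality is used. For the ``if'' direction, if $\widehat{T}=T^\rho_D$ then $T$ is $D$-maximal pseudomonotone; this is exactly the first assertion of Corollary~\ref{H-lemma}, which does not require pre-maximality, so nothing new is needed here.

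For the ``only if'' direction, suppose $T$ is $D$-maximal pseudomonotone (and, by hypothesis, pre-maximal pseudomonotone). By Theorem~\ref{teo:dmax} we have $\widehat{T}=(\widehat{T})^\rho_D$, so it suffices to establish $(\widehat{T})^\rho_D=T^\rho_D$. The inclusion $(\widehat{T})^\rho_D\subset T^\rho_D$ is already recorded in~\eqref{eq:2}, so the work lies in the reverse inclusion, and this is precisely where pre-maximality enters: since $T$ is pre-maximal pseudomonotone, the operator $T^\rho$ is pseudomonotone. Now take $(x,x^*)\in T^\rho_D$, so that $x\in\dom(T)=\dom(\widehat{T})$ and $(x,x^*)\in T^\rho$. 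Given an arbitrary $(y,y^*)\in\widehat{T}$, the chain~\eqref{eq:1} gives $(y,y^*)\in T^\rho$; since both $(x,x^*)$ and $(y,y^*)$ lie in the pseudomonotone operator $T^\rho$, they are pseudomonotonically related. As $(y,y^*)\in\widehat{T}$ was arbitrary, $(x,x^*)\in(\widehat{T})^\rho$, and combined with $x\in\dom(\widehat{T})$ this yields $(x,x^*)\in(\widehat{T})^\rho_D$. Hence $T^\rho_D\subset(\widehat{T})^\rho_D$, so the two operators coincide, and $\widehat{T}=(\widehat{T})^\rho_D=T^\rho_D$.

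I do not expect a genuine obstacle here: the argument is a short assembly of Theorem~\ref{teo:dmax} together with the inclusions~\eqref{eq:1}--\eqref{eq:2}. The one point worth emphasizing is conceptual rather than technical: pre-maximal pseudomonotonicity of $T$ is exactly what forces $T^\rho$ (hence its restriction $T^\rho_D$) to be pseudomonotone, and this is the ingredient that substitutes for the convexity-of-domain assumption used in Proposition~\ref{pro:domconv} and in the converse half of Corollary~\ref{H-lemma}. The only bookkeeping to verify is that no domain point is lost in passing between $T^\rho$ and $T^\rho_D$, i.e.\ that $\dom(T^\rho_D)=\dom(T)=\dom(\widehat{T})$, which holds because $T\subset\widehat{T}\subset T^\rho$ and all four operators in~\eqref{eq:2} share the domain $\dom(T)$.
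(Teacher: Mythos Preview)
Your proof is correct, and it rests on the same key observation as the paper's: pre-maximality of $T$ makes $T^\rho$ pseudomonotone, which is what replaces the convexity hypothesis of Proposition~\ref{pro:domconv}. The packaging, however, differs. For the ``only if'' direction the paper argues more directly: since $T^\rho_D$ is pseudomonotone and, by~\eqref{eq:2}, is an extension of $\widehat{T}$ with the same domain, $D$-maximality (via the fact that $\widehat{T}$ admits no proper pseudomonotone extension with the same domain) immediately forces $\widehat{T}=T^\rho_D$. You instead first invoke Theorem~\ref{teo:dmax} to get $\widehat{T}=(\widehat{T})^\rho_D$, and then prove the separate equality $(\widehat{T})^\rho_D=T^\rho_D$ by showing that any element of $T^\rho_D$ is pseudomonotonically related to every element of $\widehat{T}\subset T^\rho$. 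Your route is a touch longer but has the merit of explicitly recording the pre-maximal analogue of Proposition~\ref{pro:domconv}, namely that $(\widehat{T})^\rho_D=T^\rho_D$ whenever $T$ is pre-maximal pseudomonotone; the paper's route is shorter because it appeals to the definition of $D$-maximality rather than to its polar characterization.
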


\begin{proof}
First note that if $T$ is pre-maximal pseudomonotone then $T^{\rho}_D$ is pseudomonotone.  This, together with~\eqref{eq:2}, implies that $T^{\rho}_D$ is a pseudomonotone extension of $\widehat{T}$.  Therefore, if $T$ is $D$-maximal pseudomonotone, $\widehat{T}=T^{\rho}_D$.  The converse follows again from~\eqref{eq:2} and Theorem~\ref{teo:dmax}.
\end{proof}

From Lemma \ref{pre-max} and inclusion (\ref{eq:1}) we have the following corollary.
\begin{corollary}
Let $T:X\tos X^*$ be a pre-maximal pseudomonotone operator. Then $\widehat{T}$ is pre-maximal pseudomonotone and $(\widehat{T})^\rho=T^\rho$. 
\end{corollary}

\section{On variational inequalities}
We now deal with variational inequality problems, in the sense of Stampacchia and Minty.  Given a multivalued operator $T:X\tos X^*$ and a set $K\subset X$, the \emph{Stampacchia Variational Inequality Problem} associated to $T$ and $K$ is 
\begin{equation}
\text{to find }x\in K,\,\text{such that }\exists\,x^*\in T(x),\,\inner{y-x}{x^*}\geq 0,\,\forall\,y\in K.\tag{SVIP}
\end{equation}
In a dual way, 
the \emph{Minty Variational Inequality Problem} associated to $T$ and $K$ is
\begin{equation}
\text{to find }x\in K,\,\text{such that }\inner{x-y}{y^*}\leq 0,\,\forall\,(y,y^*)\in T,\, y\in K.\tag{MVIP}
\end{equation}
The solution sets of the (SVIP) and (MVIP) will be denoted as $S(T,K)$ and $M(T,K)$, respectively. 

It is not difficult to prove that $M(T,K)$ is convex and (weakly-)closed, provided that $K$ is convex and (weakly-)closed. Also, when $K=X$, 
\[
S(T,X)=Z_T\quad\text{and}\quad M(T,X)=Z_{T^{\rho}}.
\]

We now study the connections between the VIP solution sets and the pseudo and quasi-monotone polars. First, we need the following lemma.
\begin{lemma}\label{lemma1}
Let $T_1,T_2:X\tos X^*$ be multivalued operators and $K\subset X$. If $T_1\subset T_2$ then $S(T_1,K)\subset S(T_2,K)$ and $M(T_2,K)\subset M(T_1,K)$.
\end{lemma}

Noting that $T^\mu\subset T^\rho\subset T^\nu$, by the previous lemma, we conclude 
\begin{gather*}
 S(T^\mu,K)\subset S(T^\rho,K)\subset S(T^\nu,K)=K,\\
 M(T^\nu,K)\subset M(T^\rho,K)\subset M(T^\mu,K).
\end{gather*}
for any $K\subset X$.

The following result says that, in general, there are relationship between solution sets of SVIP and MVIP associated to a multivalued operator and its pseudomonotone polar.

\begin{proposition}\label{S-M}
Let $T:X\tos X^*$ be a multivalued operator. Then, for every $K\subset X$,
\[ 
S(T^\rho,K)\subset M(T,K)\quad\text{and}\quad S(T,K)\subset M(T^\rho,K). 
\]
\end{proposition}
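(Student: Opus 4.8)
The plan is to prove the two inclusions separately, in each case unwinding the definitions of the solution sets and the pseudomonotone relation $\sim_p$, and exploiting the two characterizations already available: the definition of $T^\rho$ via $\sim_p$, and the fact (noted just before Lemma~\ref{lemma1}) that $S(T^\rho,K)\subset S(T^\nu,K)=K$, so that every point we work with automatically lies in $K$.

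\medskip

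First I would prove $S(T^\rho,K)\subset M(T,K)$. Take $x\in S(T^\rho,K)$, so $x\in K$ and there is $x^*\in T^\rho(x)$ with $\inner{y-x}{x^*}\ge 0$ for all $y\in K$. To check $x\in M(T,K)$, fix an arbitrary $(y,y^*)\in T$ with $y\in K$; I must show $\inner{x-y}{y^*}\le 0$. Since $x^*\in T^\rho(x)$ and $(y,y^*)\in T$, we have $(x,x^*)\sim_p(y,y^*)$, i.e.\ either $\min\{\inner{x-y}{y^*},\inner{y-x}{x^*}\}<0$ or $\inner{x-y}{y^*}=\inner{y-x}{x^*}=0$. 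But $\inner{y-x}{x^*}\ge 0$ by the SVIP property, which rules out $\inner{y-x}{x^*}<0$; in the first alternative we are therefore forced into $\inner{x-y}{y^*}<0$, and in the second $\inner{x-y}{y^*}=0$. Either way $\inner{x-y}{y^*}\le 0$, so $x\in M(T,K)$.

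\medskip

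Second, I would prove $S(T,K)\subset M(T^\rho,K)$. Take $x\in S(T,K)$: then $x\in K$ and there is $x^*\in T(x)$ with $\inner{y-x}{x^*}\ge 0$ for all $y\in K$. To check $x\in M(T^\rho,K)$, fix $(y,y^*)\in T^\rho$ with $y\in K$; I must show $\inner{x-y}{y^*}\le 0$. Now $(y,y^*)\in T^\rho$ and $(x,x^*)\in T$, so by symmetry of $\sim_p$ we have $(y,y^*)\sim_p(x,x^*)$, i.e.\ either $\min\{\inner{y-x}{x^*},\inner{x-y}{y^*}\}<0$ or $\inner{y-x}{x^*}=\inner{x-y}{y^*}=0$. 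Again $\inner{y-x}{x^*}\ge 0$ from the SVIP property for $T$, which eliminates $\inner{y-x}{x^*}<0$; hence in the first case $\inner{x-y}{y^*}<0$ and in the second $\inner{x-y}{y^*}=0$, so $\inner{x-y}{y^*}\le 0$ in all cases, giving $x\in M(T^\rho,K)$.

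\medskip

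There is essentially no serious obstacle here: both arguments are a direct case analysis on the definition of $\sim_p$, and the only subtlety is remembering that an SVIP solution relative to $K$ is automatically a member of $K$, so that it is legitimate to feed pairs $(y,y^*)$ with $y\in K$ into the inequalities. The mild point to be careful about is the direction of the inner products — $\inner{y-x}{x^*}$ versus $\inner{x-y}{x^*}$ — and the use of symmetry of $\sim_p$ in the second inclusion to line up the roles of $x$ and $y$ with the pseudomonotone-polar membership. One could also phrase the whole thing more slickly via Corollary~\ref{123} (pseudomonotonicity of $T\cup\{(x,x^*)\}$), but the direct case split is shortest and self-contained.
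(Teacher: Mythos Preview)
Your proof is correct and follows essentially the same approach as the paper: both arguments hinge on the fact that any $(x,x^*)\in T^\rho$ and $(y,y^*)\in T$ satisfy $(x,x^*)\sim_p(y,y^*)$, combined with the SVIP inequality $\inner{y-x}{x^*}\ge 0$ to force $\inner{x-y}{y^*}\le 0$. The only cosmetic difference is that the paper phrases the first inclusion by contradiction (assuming $x\notin M(T,K)$ and deriving a violation of $\sim_p$) and then says the second is analogous, whereas you do a direct case split for both.
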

\begin{proof}
Let $K$ be a subset of $X$, take $x\in S(T^\rho,K)$ and assume that $x\notin M(T,K)$. Then there exist $(y,y^*)\in T$, with $y\in K$, and $x^*\in T^{\rho}(x)$, such that $\inner{x-y}{y^*}>0$ and $\inner{y-x}{x^*}\geq 0$. This contradicts the fact that $(x,x^*)\sim_p(y,y^*)$, and we conclude $S(T^\rho,K)\subset M(T,K)$.  The remaining inclusion can be proved analogously.
\end{proof}
The inclusions in the previous result are strict in general. For instance, the operator defined in Example~\ref{ejem1} verifies 
\[
S(T,\R)=\emptyset~\mbox{ and }~M(T^\rho,\R)=\{0\}.
\]
In addition, the operator defined in the Example \ref{ejem-variational} verifies, for $K=\{1,2\}$,
\[
S(T^\rho,K)=\emptyset~\mbox{ and }~M(T,K)=K.
\]

\newcommand{\subsetv}{\rotatebox[origin=c]{270}{$\subset$}}

Note that Proposition~\ref{S-M} does not ask for any condition on $T$ nor $K$. When $T$ is pseudomonotone (or even monotone), we can obtain a more complete array of inclusions between the VIP solution sets.
\begin{corollary}\label{coro:incl}
Let $T:X\tos X^*$ be a multivalued operator and $K\subset X$. 
\begin{enumerate}
\item If $T$ is pseudomonotone,
\begin{equation}\label{eq:incpseudo}
	\begin{array}{ccc}
		S(T,K)&\subset & S(T^{\rho},K)\\
		\subsetv&&\subsetv\\
		M(T^{\rho},K)&\subset &M(T,K)\\
	\end{array}
\end{equation}
\item If $T$ is monotone,
\begin{equation}\label{eq:incmono}
	\begin{array}{ccccc}
		S(T,K)&\subset& S(T^{\mu},K)&\subset& S(T^{\rho},K)\\
		\subsetv&& &&\subsetv\\
		M(T^{\rho},K)&\subset &M(T^{\mu},K)&\subset &M(T,K)\\
	\end{array}
\end{equation}
\end{enumerate}
\end{corollary}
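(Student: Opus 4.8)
The plan is to deduce both diagrams from the inclusions already assembled just before the statement together with Proposition~\ref{S-M} and Lemma~\ref{lemma1}, so that no new variational-inequality argument is needed. For item~1, assume $T$ is pseudomonotone, so by Proposition~\ref{pro:4.1} we have $T\subset T^{\rho}$. Applying Lemma~\ref{lemma1} to this inclusion gives the horizontal inclusions $S(T,K)\subset S(T^{\rho},K)$ and $M(T^{\rho},K)\subset M(T,K)$. The two vertical inclusions are exactly the content of Proposition~\ref{S-M}: $S(T^{\rho},K)\subset M(T,K)$ is the second vertical arrow, and $S(T,K)\subset M(T^{\rho},K)$ follows by applying Proposition~\ref{S-M} to the operator $T$ directly (its first inclusion reads $S(T^{\rho},K)\subset M(T,K)$; for the left vertical one uses $S(T,K)\subset M(T^{\rho},K)$, which is precisely the second inclusion in Proposition~\ref{S-M}). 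Assembling these four inclusions yields diagram~\eqref{eq:incpseudo}.

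For item~2, assume $T$ is monotone. Then $T\subset T^{\mu}$, and since a monotone operator is pseudomonotone we also have, via~\eqref{eq:polarsubset}, the chain $T\subset T^{\mu}\subset T^{\rho}$. Feeding each link of this chain into Lemma~\ref{lemma1} produces the horizontal inclusions in both rows of~\eqref{eq:incmono}: $S(T,K)\subset S(T^{\mu},K)\subset S(T^{\rho},K)$ and $M(T^{\rho},K)\subset M(T^{\mu},K)\subset M(T,K)$. The left vertical inclusion $S(T,K)\subset M(T^{\rho},K)$ and the right vertical inclusion $S(T^{\rho},K)\subset M(T,K)$ come again from Proposition~\ref{S-M} applied to $T$. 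This closes the rectangle in~\eqref{eq:incmono}. Strictly speaking one should remark that the middle column is consistent, i.e. $S(T^{\mu},K)$ and $M(T^{\mu},K)$ sit correctly between the outer columns, but this is automatic once the outer rectangle and the horizontal chains are in place.

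There is essentially no obstacle here: the corollary is a bookkeeping consequence of results already proved, and the only point requiring a moment's care is matching the two inclusions of Proposition~\ref{S-M} (stated for a generic operator and its polar) to the correct vertical arrows of each diagram, keeping in mind that $S(T^{\rho},K)\subset M(T,K)$ uses the polar on the ``$S$'' side while $S(T,K)\subset M(T^{\rho},K)$ uses it on the ``$M$'' side. I would therefore write the proof in two short paragraphs, one per item, each explicitly citing Lemma~\ref{lemma1} for the horizontal inclusions and Proposition~\ref{S-M} for the vertical ones, and in the monotone case additionally invoking~\eqref{eq:polarsubset} to obtain $T^{\mu}\subset T^{\rho}$.

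\begin{proof}
\emph{Item 1.} Since $T$ is pseudomonotone, Proposition~\ref{pro:4.1} gives $T\subset T^{\rho}$, so Lemma~\ref{lemma1} yields $S(T,K)\subset S(T^{\rho},K)$ and $M(T^{\rho},K)\subset M(T,K)$, which are the two horizontal inclusions of~\eqref{eq:incpseudo}. The two vertical inclusions $S(T^{\rho},K)\subset M(T,K)$ and $S(T,K)\subset M(T^{\rho},K)$ are precisely the two inclusions in Proposition~\ref{S-M}. Combining these four inclusions gives~\eqref{eq:incpseudo}.

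\emph{Item 2.} If $T$ is monotone then $T\subset T^{\mu}$, and by~\eqref{eq:polarsubset} we have $T^{\mu}\subset T^{\rho}$; hence $T\subset T^{\mu}\subset T^{\rho}$. Applying Lemma~\ref{lemma1} to each inclusion in this chain gives $S(T,K)\subset S(T^{\mu},K)\subset S(T^{\rho},K)$ and $M(T^{\rho},K)\subset M(T^{\mu},K)\subset M(T,K)$, which are the two horizontal chains of~\eqref{eq:incmono}. Finally, Proposition~\ref{S-M} applied to $T$ provides the vertical inclusions $S(T,K)\subset M(T^{\rho},K)$ and $S(T^{\rho},K)\subset M(T,K)$. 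Together, these inclusions establish~\eqref{eq:incmono}.
\end{proof}
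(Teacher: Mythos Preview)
Your proof is correct and matches the paper's intended reasoning: the corollary is stated in the paper without proof, being an immediate consequence of Lemma~\ref{lemma1} (horizontal inclusions from $T\subset T^{\rho}$, respectively $T\subset T^{\mu}\subset T^{\rho}$) together with the two inclusions of Proposition~\ref{S-M} (the vertical arrows). Your write-up spells out exactly these steps, so there is nothing to add.
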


We now recall the following well known result due to John~\cite{John2001}, which characterizes the pseudomonotonicity of a multivalued operator, with certain inclusions between the VIP solution sets.  We must remark that, although John originally dealt with finite dimensional spaces, his proof can be easily adapted to general Banach spaces.

\begin{theorem}[\cite{John2001},Theorem 2]\label{john-pseudomonotone}
	Let $T:X\tos X^*$ be a multivalued operator. Then $T$ is pseudomonotone if, and only if, $S(T,K)\subset M(T,K)$, for all $K\subset \dom(T)$.
\end{theorem}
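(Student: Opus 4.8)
The plan is to treat the two implications separately: the forward implication is essentially already recorded in the excerpt, while the converse needs only a short contrapositive argument based on a two-point test set.

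For the ``only if'' part, assume $T$ is pseudomonotone and fix $K\subset\dom(T)$. The inclusion $S(T,K)\subset M(T,K)$ is precisely the composition $S(T,K)\subset S(T^{\rho},K)\subset M(T,K)$ coming from Corollary~\ref{coro:incl}, item~1, so I would simply invoke that. Alternatively, one can reprove it in one line: if $x\in S(T,K)$ with witness $x^*\in T(x)$, then for every $(y,y^*)\in T$ with $y\in K$ we have $\inner{y-x}{x^*}\geq 0$, and pseudomonotonicity of $T$ applied to the pairs $(x,x^*),(y,y^*)\in T$ yields $\inner{y-x}{y^*}\geq 0$, i.e.\ $\inner{x-y}{y^*}\leq 0$; since also $x\in K$, this gives $x\in M(T,K)$.

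For the ``if'' part, I would argue by contraposition. Suppose $T$ is not pseudomonotone. Then there exist $(a,a^*),(b,b^*)\in T$ with $\inner{b-a}{a^*}\geq 0$ and $\inner{b-a}{b^*}<0$; in particular $a\neq b$. Set $K=\{a,b\}$, which is a subset of $\dom(T)$. Then $a\in S(T,K)$: the element $a^*\in T(a)$ satisfies $\inner{a-a}{a^*}=0\geq 0$ and $\inner{b-a}{a^*}\geq 0$. However $a\notin M(T,K)$: membership would require $\inner{a-b}{b^*}\leq 0$ (testing against $(b,b^*)\in T$ with $b\in K$), i.e.\ $\inner{b-a}{b^*}\geq 0$, contradicting $\inner{b-a}{b^*}<0$. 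Hence $S(T,K)\not\subset M(T,K)$, and the contrapositive is established.

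I do not expect a genuine obstacle here: once one observes that any violation of pseudomonotonicity is already witnessed by a single two-element subset of $\dom(T)$, the argument is completely elementary and needs no topological or dimensional hypotheses, so the adaptation of John's finite-dimensional statement to a general Banach space is immediate. The only points requiring a little care are the slightly asymmetric roles played by $x$ and by the pair $(y,y^*)$ in the definitions of $S(T,K)$ and $M(T,K)$, and checking that the chosen $K$ lies in $\dom(T)$ — both of which are routine.
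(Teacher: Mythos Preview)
The paper does not supply its own proof of this statement: it is quoted as Theorem~2 of John~\cite{John2001}, with only the remark that John's finite-dimensional argument carries over to Banach spaces. Your proposal is correct and is exactly that standard argument---the forward direction is the one-line use of the pseudomonotonicity implication, and the converse is the contrapositive witnessed by the two-point set $K=\{a,b\}$, which is purely algebraic and hence dimension-free, matching the paper's remark.

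One small comment: citing Corollary~\ref{coro:incl} for the forward direction is legitimate (it precedes this theorem and depends only on Lemma~\ref{lemma1} and Proposition~\ref{S-M}, not on Theorem~\ref{john-pseudomonotone}), but since Proposition~\ref{pseudo-SVIP} later invokes Theorem~\ref{john-pseudomonotone}, your self-contained one-line alternative is the cleaner choice to avoid any appearance of circularity.
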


In a similar way to Theorem \ref{john-pseudomonotone}, we establish a characterization of pseudomonotonicity by comparing the solution sets of the VIPs associated to the operator and its pseudomonotone polar.
\begin{proposition}\label{pseudo-SVIP}
Let $T:X\tos X^*$ be a multivalued operator. Then
the following conditions are equivalent.
\begin{enumerate}
 \item $T$ is pseudomonotone. 
\item $S(T,K)\subset S(T^\rho,K)$, for all $K\subset X$.
\item $M(T^\rho,K)\subset M(T, K)$, for all $K\subset X$.
\end{enumerate}
\end{proposition}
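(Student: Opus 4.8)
The plan is to close the cycle by proving $(1)\Rightarrow(2)$, $(1)\Rightarrow(3)$, $(2)\Rightarrow(1)$ and $(3)\Rightarrow(1)$, so all three statements become equivalent. The two implications out of $(1)$ are free: if $T$ is pseudomonotone then $T\subset T^{\rho}$ by Proposition~\ref{pro:4.1}, and Lemma~\ref{lemma1} immediately gives $S(T,K)\subset S(T^{\rho},K)$ and $M(T^{\rho},K)\subset M(T,K)$ for every $K\subset X$; alternatively one may just quote the array~\eqref{eq:incpseudo} in Corollary~\ref{coro:incl}. So the content is entirely in the two converses, which I would establish by contraposition, in the spirit of Theorem~\ref{john-pseudomonotone}, using a two‑point set $K$.

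Suppose $T$ is not pseudomonotone. Then there are $(x,x^*),(y,y^*)\in T$ that are not pseudomonotonically related, i.e.\ $\inner{y-x}{x^*}\geq 0$ and $\inner{x-y}{y^*}\geq 0$ with at least one inequality strict; interchanging the two pairs if necessary, we may assume $\inner{x-y}{y^*}>0$ and $\inner{y-x}{x^*}\geq 0$ (in particular $x\neq y$). For $(2)\Rightarrow(1)$, take $K=\{x,y\}$. Then $x\in S(T,K)$, witnessed by $x^*$, since $\inner{x-x}{x^*}=0$ and $\inner{y-x}{x^*}\geq 0$. I claim $x\notin S(T^{\rho},K)$: otherwise there is $\widehat{x}^*\in T^{\rho}(x)$ with $\inner{y-x}{\widehat{x}^*}\geq 0$, and since $(x,\widehat{x}^*)\in T^{\rho}$ and $(y,y^*)\in T$ we would have $(x,\widehat{x}^*)\sim_p(y,y^*)$; but $\inner{x-y}{y^*}>0$ together with $\inner{y-x}{\widehat{x}^*}\geq 0$ makes the minimum nonnegative without both terms vanishing, a contradiction. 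Hence $S(T,K)\not\subset S(T^{\rho},K)$, contradicting~$(2)$.

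For $(3)\Rightarrow(1)$ I would use the same $(x,x^*),(y,y^*)$ and the same $K=\{x,y\}$. Here $x\notin M(T,K)$, because $(y,y^*)\in T$, $y\in K$, and $\inner{x-y}{y^*}>0$. On the other hand $x\in M(T^{\rho},K)$: for the pair $(x,w^*)\in T^{\rho}$ the inequality $\inner{x-x}{w^*}\leq 0$ is trivial, and for any $w^*\in T^{\rho}(y)$ we have $(y,w^*)\sim_p(x,x^*)$, so either $\min\{\inner{y-x}{x^*},\inner{x-y}{w^*}\}<0$ or $\inner{y-x}{x^*}=\inner{x-y}{w^*}=0$; since $\inner{y-x}{x^*}\geq 0$, both alternatives force $\inner{x-y}{w^*}\leq 0$. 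Thus $x\in M(T^{\rho},K)\setminus M(T,K)$, contradicting~$(3)$.

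The routine points to verify are that the ``without loss of generality'' reduction is legitimate (the negation of pseudomonotonicity yields exactly the stated pair of inequalities, up to swapping the two pairs) and the vacuous cases in which $T^{\rho}(x)$ or $T^{\rho}(y)$ is empty, which only make the membership $x\in M(T^{\rho},K)$ or the non-membership $x\notin S(T^{\rho},K)$ easier. The only genuine subtlety I anticipate is the case split in $(3)\Rightarrow(1)$ according to whether $\inner{y-x}{x^*}$ is zero or strictly positive — precisely what the two disjuncts defining $\sim_p$ are there to absorb; note that, as in Theorem~\ref{john-pseudomonotone}, no topological or completeness assumptions on $X$ or $K$ are used.
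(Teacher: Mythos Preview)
Your argument is correct. The forward implications $(1)\Rightarrow(2)$ and $(1)\Rightarrow(3)$ match the paper exactly, and your contrapositive arguments with the two-point set $K=\{x,y\}$ for the converses are valid; the verifications that $x\in S(T,K)\setminus S(T^{\rho},K)$ and $x\in M(T^{\rho},K)\setminus M(T,K)$ go through just as you wrote them.

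The paper, however, does not argue the converses directly. Instead it composes two already-established inclusions: by Proposition~\ref{S-M} (which needs no hypothesis on $T$) one has $S(T^{\rho},K)\subset M(T,K)$ and $S(T,K)\subset M(T^{\rho},K)$ for every $K$, so assuming $(2)$ gives $S(T,K)\subset S(T^{\rho},K)\subset M(T,K)$, and assuming $(3)$ gives $S(T,K)\subset M(T^{\rho},K)\subset M(T,K)$; in either case John's Theorem~\ref{john-pseudomonotone} then yields pseudomonotonicity. Thus the paper's proof is a two-line reduction to existing results, whereas yours is a self-contained construction that essentially re-proves (the relevant direction of) John's theorem in this setting. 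Your approach buys independence from Theorem~\ref{john-pseudomonotone} and makes the obstruction explicit; the paper's approach is shorter once Proposition~\ref{S-M} and John's result are on the table.
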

\begin{proof}
From Lemma~\ref{lemma1}, we obtain that {\it 1} implies {\it 2} and {\it 3}. Moreover, from Theorem~\ref{john-pseudomonotone} and equation~\eqref{eq:incpseudo} in Corollary~\ref{coro:incl}, we conclude that items {\it 2} and {\it 3} imply {\it 1}. 
The theorem follows.
\end{proof}

From Theorem~\ref{john-pseudomonotone} and Proposition~\ref{pseudo-SVIP}, we can characterize pre-maximal pseudomonotonicity by the inclusions of the solution sets of the (SVIP) into the (MVIP).
\begin{corollary}
Let $T:X\tos X^*$ be a pseudomonotone operator, everywhere defined. Then $T$ is pre-maximal pseudomonotone if, and only if, $S(T^\rho,K)\subset M(T^\rho,K)$, for all $K\subset\dom(T^{\rho})$.
\end{corollary}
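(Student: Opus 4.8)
The plan is to combine the two characterizations already available—Theorem~\ref{john-pseudomonotone} (John's characterization of pseudomonotonicity via $S(T,K)\subset M(T,K)$ for $K\subset\dom(T)$) and Proposition~\ref{pseudo-SVIP}—applied to the operator $T^\rho$ rather than $T$ itself. Recall that $T$ is pre-maximal pseudomonotone precisely when both $T$ and $T^\rho$ are pseudomonotone; since we are already assuming $T$ pseudomonotone (so $T\subset T^\rho$ by Proposition~\ref{pro:4.1}), pre-maximal pseudomonotonicity of $T$ is equivalent to pseudomonotonicity of $T^\rho$ alone.

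First I would apply Theorem~\ref{john-pseudomonotone} to the operator $T^\rho$: it says $T^\rho$ is pseudomonotone if and only if $S(T^\rho,K)\subset M(T^\rho,K)$ for all $K\subset\dom(T^\rho)$. So the stated equivalence reduces exactly to: $T$ is pre-maximal pseudomonotone $\iff$ $T^\rho$ is pseudomonotone $\iff$ $S(T^\rho,K)\subset M(T^\rho,K)$ for all $K\subset\dom(T^\rho)$. The middle equivalence is the definition of pre-maximality together with the assumption that $T$ is already pseudomonotone; the right equivalence is John's theorem applied to $T^\rho$. The hypothesis that $T$ is everywhere defined should enter to ensure $\dom(T^\rho)$ behaves well—in fact if $\dom(T)=X$ then $\dom(T^\rho)=X$ as well, so the range of $K$'s in John's criterion for $T^\rho$ is all subsets of $X$, matching cleanly.

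As a sanity cross-check I would also verify the implication directly in one direction using Proposition~\ref{pseudo-SVIP} applied to $T^\rho$: since $T^{\rho\rho\rho}=T^\rho$ (Proposition~\ref{pro:polarp}, item~3), Proposition~\ref{pseudo-SVIP} applied to $T^\rho$ says $T^\rho$ is pseudomonotone iff $S(T^\rho,K)\subset S(T^{\rho\rho},K)$ for all $K$, iff $M(T^{\rho\rho},K)\subset M(T^\rho,K)$ for all $K$; combining with Corollary~\ref{coro:incl} (the inclusion diagram~\eqref{eq:incpseudo} for the pseudomonotone operator $T$) and Proposition~\ref{S-M} gives the desired comparison $S(T^\rho,K)\subset M(T^\rho,K)$. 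This route makes transparent why the conclusion is phrased with $T^\rho$ on both sides.

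The only real obstacle I anticipate is bookkeeping about the domain: John's theorem quantifies over $K\subset\dom(\cdot)$, and one must be careful that applying it to $T^\rho$ gives $K\subset\dom(T^\rho)$, not $K\subset\dom(T)$; the ``everywhere defined'' hypothesis is there precisely to keep these domains from causing trouble (and to make $Z_{T^\rho}$, $M(T^\rho,X)$ etc.\ coincide with the expected objects). Beyond that, the argument is a two-line assembly of previously established results, so I would keep the written proof short: invoke the definition of pre-maximal pseudomonotonicity, note $T\subset T^\rho$ by Proposition~\ref{pro:4.1}, and apply Theorem~\ref{john-pseudomonotone} to $T^\rho$.
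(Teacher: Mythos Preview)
Your proposal is correct and matches the paper's approach: the paper simply states the corollary as a consequence of Theorem~\ref{john-pseudomonotone} and Proposition~\ref{pseudo-SVIP}, and your main line---reduce pre-maximality to pseudomonotonicity of $T^\rho$ (since $T$ is already pseudomonotone) and then apply John's criterion to $T^\rho$---is exactly the intended argument. Your observation that $\dom(T)=X$ forces $\dom(T^\rho)=X$ via $T\subset T^\rho$ is the right way to handle the domain bookkeeping.
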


The following result gives a sufficient condition for the pre-maximality of a multivalued operator.
\begin{proposition}
Let $T:X\tos X^*$ be a multivalued operator. If $S(T,K)=S(T^\rho,K)$ or $M(T,K)=M(T^\rho,K)$, for all $K\subset X$ then $T$ is pre-maximal pseudomonotone.
\end{proposition}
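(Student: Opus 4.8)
The plan is to treat the two hypotheses symmetrically and, in each case, to (i) deduce pseudomonotonicity of $T$ from Proposition~\ref{pseudo-SVIP}, and (ii) deduce pseudomonotonicity of $T^{\rho}$ by combining the unconditional inclusions of Proposition~\ref{S-M} with John's characterization, Theorem~\ref{john-pseudomonotone}. Recall that $T$ is pre-maximal pseudomonotone precisely when both $T$ and $T^{\rho}$ are pseudomonotone, so these two facts together finish the proof.

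First suppose $S(T,K)=S(T^{\rho},K)$ for all $K\subset X$. In particular $S(T,K)\subset S(T^{\rho},K)$ for all $K$, so by Proposition~\ref{pseudo-SVIP} (item~2 implies item~1) the operator $T$ is pseudomonotone. For the polar, I would use that, by Proposition~\ref{S-M}, $S(T,K)\subset M(T^{\rho},K)$ for every $K\subset X$; using the hypothesis this becomes $S(T^{\rho},K)=S(T,K)\subset M(T^{\rho},K)$ for all $K\subset X$, hence in particular for all $K\subset\dom(T^{\rho})$. By Theorem~\ref{john-pseudomonotone} applied to $T^{\rho}$, this shows $T^{\rho}$ is pseudomonotone, and therefore $T$ is pre-maximal pseudomonotone.

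Now suppose instead $M(T,K)=M(T^{\rho},K)$ for all $K\subset X$. Then $M(T^{\rho},K)\subset M(T,K)$ for all $K$, so Proposition~\ref{pseudo-SVIP} (item~3 implies item~1) again gives that $T$ is pseudomonotone. For the polar, Proposition~\ref{S-M} gives $S(T^{\rho},K)\subset M(T,K)$ for every $K\subset X$, and invoking the hypothesis we get $S(T^{\rho},K)\subset M(T,K)=M(T^{\rho},K)$ for all $K\subset X$; as before Theorem~\ref{john-pseudomonotone} yields pseudomonotonicity of $T^{\rho}$, so $T$ is pre-maximal pseudomonotone.

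I do not expect a genuine obstacle here: the only point requiring a moment's thought is to recognize that each of the two equalities is exactly what is needed to ``close the loop'' between the one-sided inclusion coming from Proposition~\ref{S-M} and the one-sided inclusion needed in John's theorem for $T^{\rho}$. One should also be careful to note that the hypothesis is assumed for all $K\subset X$, which comfortably covers the subsets of $\dom(T)$ and $\dom(T^{\rho})$ that Theorem~\ref{john-pseudomonotone} refers to, so no ``everywhere defined'' assumption is needed.
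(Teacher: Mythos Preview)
Your argument is correct. The paper actually states this proposition without proof, so there is nothing to compare against; but the route you chose is exactly the one the surrounding machinery is set up for. In each case you correctly extract pseudomonotonicity of $T$ from Proposition~\ref{pseudo-SVIP}, and then chain the unconditional inclusion of Proposition~\ref{S-M} with the assumed equality to obtain $S(T^{\rho},K)\subset M(T^{\rho},K)$ for all $K\subset X$, whence Theorem~\ref{john-pseudomonotone} gives pseudomonotonicity of $T^{\rho}$. Your closing remark about the quantifier (all $K\subset X$ suffices to cover all $K\subset\dom(T^{\rho})$) is the right observation to make, and no additional hypothesis is needed.
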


\end{document}